\documentclass[11pt]{amsart}
\usepackage{amsmath,amssymb,color,cite,verbatim}
\usepackage{etoolbox}
\apptocmd{\sloppy}{\hbadness 10000\relax}{}{}

\numberwithin{equation}{section}

\newtheorem{thm}[equation]{Theorem}
\newtheorem{prop}[equation]{Proposition}
\newtheorem{lemma}[equation]{Lemma}
\newtheorem{cor}[equation]{Corollary}

\theoremstyle{definition}
\newtheorem{rmk}[equation]{Remark}
\newtheorem{notation}[equation]{Notation}

\DeclareMathOperator{\charp}{char}
\DeclareMathOperator{\lcm}{lcm}

\DeclareMathOperator{\Aut}{Aut}

\newcommand{\mybar}[1]{#1\llap{$\overline{\phantom{\rm#1}}$}}
\newcommand{\abs}[1]{\lvert #1 \rvert}

\usepackage[colorlinks,pagebackref,pdftex, bookmarks=false]{hyperref}

\begin{document}

\title{Extensions of absolute values on two subfields}

\author{Zhiguo Ding}
\address{
  School of Advanced Study,
  Hunan Institute of Traffic Engineering,
  Hengyang, Hunan 421001 China
}
\email{ding8191@qq.com}

\author{Michael E. Zieve}
\address{
  Department of Mathematics,
  University of Michigan,
  530 Church Street,
  Ann Arbor, MI 48109-1043 USA
}
\email{zieve@umich.edu}
\urladdr{http://www.math.lsa.umich.edu/$\sim$zieve/}

\date{\today}

\begin{abstract}
We describe the absolute values on a field which simultaneously extend absolute values on two subfields.  We also give a common generalization of many versions of Abhyankar's lemma on ramification indices, which is both widely applicable and easy to state.  We then apply these results to count points on the fibered product of two curve morphisms $C\to X$ and $D\to X$ which lie over prescribed points on $C$ and $D$.
\end{abstract}

\thanks{
The authors thank Franz-Viktor Kuhlmann for helpful comments.
The second author thanks the National Science Foundation for support under grant DMS-1601844.}

\maketitle


\section{Introduction}

A classical topic in the theory of absolute values is the study of the set of absolute values on a field $F$ which extend a prescribed absolute value on a subfield $K$.  For instance, a classification of such extensions in case $F/K$ is finite appears in \cite[\S XII.3]{L}, and the case of finitely generated $F/K$ appears in \cite[\S 6.1, 7.1, 9.1]{C}.

In this paper we study absolute values on a field $F$ which extend prescribed absolute values on two subfields $L$ and $M$ of $F$.  As we will explain, this situation arises in many areas of math, including complex dynamics \cite{OZ}, diophantine equations \cite{BT}, functional equations \cite{AZ,R}, and value distribution of meromorphic functions \cite{SZ}.

\begin{notation}
Throughout this paper, $\widehat{K}$ denotes the completion of a field $K$ with respect to some prescribed absolute 
value, and $K^a$ denotes the algebraic closure of a field $K$.
\end{notation}

The statement of our first result uses the following standard facts. If $L/K$ and $M/K$ are 
field extensions with $L/K$ both finite and separable, then there is an isomorphism 
$\phi\colon L\otimes_K M\to F_1\times F_2\times\dots\times F_n$ for some integer $n>0$ and some fields 
$F_i$. Writing $\pi_i\colon F_1\times\dots\times F_n\to F_i$ for projection on the $i$-th coordinate, 
and $\iota\colon L\to L\otimes_K M$ for the canonical homomorphism, then $\pi_i\circ\phi\circ\iota$ is 
an injective homomorphism $L\to F_i$, and by identifying $L$ with its image we view $F_i$ as an extension 
of $L$. Likewise we view $F_i$ as an extension of $M$.

\begin{thm}\label{global}
Let $L,M,K,F_i$ be as above. Let $\abs{\,\cdot\,}_L$ and $\abs{\,\cdot\,}_M$ be absolute values on $L$ 
and $M$ which extend the same absolute value $\abs{\,\cdot\,}_K$ on $K$. Let $\mathcal{V}_i$ be the 
set of all absolute values on $F_i$ extending both $\abs{\,\cdot\,}_L$ and $\abs{\,\cdot\,}_M$, and for 
$\nu\in\mathcal{V}_i$ let $\widehat{F_{i,\nu}}$ be the completion of $F_i$ with respect to $\nu$. Then 
\begin{enumerate}
\item\label{count} there is a canonical bijection between the disjoint union $\coprod_{i=1}^n \mathcal{V}_i$ 
and the set of equivalence classes of all $\widehat{K}$-algebra embeddings $\widehat{L} \to \widehat{M}^a$ 
up to composition with $\Aut_{\widehat M}(\widehat{M}^a)$
\item\label{degree} we have 
$\sum_{i=1}^n \sum_{\nu\in\mathcal{V}_i} [\widehat{F_{i,\nu}} : \widehat{M}] = [\widehat{L} : \widehat{K}]$.
\end{enumerate}
\end{thm}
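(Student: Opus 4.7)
The plan is to reduce both parts to the classical fact that for any finite separable extension $E/k$ equipped with an absolute value $\abs{\,\cdot\,}_k$, the simple factors of $E\otimes_k\widehat{k}$ are precisely the completions $\widehat{E_\nu}$ of $E$ at the extensions $\nu$ of $\abs{\,\cdot\,}_k$. I will apply this to two different decompositions of the finite \'etale $\widehat{M}$-algebra $A:=L\otimes_K\widehat{M}$.

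On the one hand, $A=(L\otimes_K M)\otimes_M\widehat{M}\cong\prod_i F_i\otimes_M\widehat{M}$, and since $L\otimes_K M$ is \'etale over $M$ each $F_i/M$ is finite separable, so the classical fact yields $F_i\otimes_M\widehat{M}\cong\prod_{\nu\in W_i}\widehat{F_{i,\nu}}$, where $W_i$ denotes the set of all absolute values on $F_i$ extending $\abs{\,\cdot\,}_M$. On the other hand, $A=(L\otimes_K\widehat{K})\otimes_{\widehat{K}}\widehat{M}\cong\prod_j\widehat{L_j}\otimes_{\widehat{K}}\widehat{M}$, where $j$ ranges over the extensions of $\abs{\,\cdot\,}_K$ to $L$; the factor indexed by $\abs{\,\cdot\,}_L$ is the distinguished piece $\widehat{L}\otimes_{\widehat{K}}\widehat{M}$. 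By uniqueness of the simple-factor decomposition of a finite \'etale algebra, the $\widehat{F_{i,\nu}}$ lying inside $\widehat{L}\otimes_{\widehat{K}}\widehat{M}$ are exactly those with $\nu\in\mathcal{V}_i$. The standard correspondence then puts the simple factors of $\widehat{L}\otimes_{\widehat{K}}\widehat{M}$ in bijection with the $\Aut_{\widehat{M}}(\widehat{M}^a)$-orbits of $\widehat{K}$-algebra embeddings $\widehat{L}\to\widehat{M}^a$, which gives \eqref{count}; and taking $\widehat{M}$-dimensions in
\[
\widehat{L}\otimes_{\widehat{K}}\widehat{M}\;\cong\;\prod_{i=1}^n\prod_{\nu\in\mathcal{V}_i}\widehat{F_{i,\nu}}
\]
gives \eqref{degree}.

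The main obstacle will be matching the two decompositions of $A$: for $\nu\in W_i$, I must verify that the simple factor $\widehat{F_{i,\nu}}$ of $A$ belongs to the block $\widehat{L}\otimes_{\widehat{K}}\widehat{M}$ precisely when $\nu$ extends $\abs{\,\cdot\,}_L$. This will reduce to showing that the inclusion $L\hookrightarrow\widehat{F_{i,\nu}}$ is continuous for the $\abs{\,\cdot\,}_L$-topology on $L$ if and only if the induced $\widehat{K}$-algebra map on completions lands inside the copy of $\widehat{L}$ already sitting in $\widehat{F_{i,\nu}}$, which in turn relies on the uniqueness of the extension of $\abs{\,\cdot\,}_{\widehat{K}}$ to the finite separable extension $\widehat{L}/\widehat{K}$. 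Once this compatibility is secured, the remainder of the argument is bookkeeping with finite \'etale algebras.
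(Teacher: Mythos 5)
Your plan is correct, and it takes a genuinely different route from the paper. The paper works with a primitive element $s$ for $L/K$, factors its minimal polynomial $f$ over $M$ (giving the $F_i$) and then factors the $\widehat K$-minimal polynomial $g$ of $s$ over $\widehat M$ (giving the $G_j$), and builds an explicit map $\sigma$ with $g_j\mid f_{\sigma(j)}$ together with a bijection $\sigma^{-1}(i)\to\mathcal V_i$; the central lemma there is that each $G_j=\widehat M[X]/(g_j)$ is the completion of $F_{\sigma(j)}=M[X]/(f_{\sigma(j)})$. You instead introduce the intermediate \'etale $\widehat M$-algebra $A=L\otimes_K\widehat M$, decompose it two ways, and invoke uniqueness of the simple-factor decomposition to conclude that $\widehat L\otimes_{\widehat K}\widehat M\cong\prod_i\prod_{\nu\in\mathcal V_i}\widehat{F_{i,\nu}}$. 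This is more coordinate-free: you avoid choosing a primitive element and replace the polynomial bookkeeping by the structure theory of finite \'etale algebras. What you buy is conceptual clarity and a visible reason why the blocks sort themselves out; what the paper buys by its explicit approach is that every step is elementary manipulation of polynomials, so there is no appeal to the abstract uniqueness statement.

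The one step you flag as the main obstacle is indeed where all the content lives, and your sketch of it is sound: for a simple factor $\widehat{F_{i,\nu}}$ of $A$, the composite $L\otimes_K\widehat K\to A\to\widehat{F_{i,\nu}}$ factors through a unique simple factor of $L\otimes_K\widehat K$, and since the image of $L\cdot\widehat K$ in $\widehat{F_{i,\nu}}$ is the completion of $L$ at $\nu|_L$ (the paper's Corollary~\ref{gen}), that factor is the completion of $L$ at $\nu|_L$; hence $\widehat{F_{i,\nu}}$ lies in the block $\widehat L\otimes_{\widehat K}\widehat M$ exactly when $\nu|_L=\abs{\,\cdot\,}_L$, i.e., $\nu\in\mathcal V_i$. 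Writing this out carefully would complete the argument. Two small points worth making explicit when you do: (i) separability of $L/K$ is what guarantees both $L\otimes_K M$ and $L\otimes_K\widehat K$ are \'etale, so every tensor product you form is reduced and the classical decomposition $E\otimes_k\widehat k\cong\prod_\nu\widehat{E_\nu}$ applies; and (ii) for part~\eqref{count}, the identification of the simple factors of $\widehat L\otimes_{\widehat K}\widehat M$ with $\Aut_{\widehat M}(\widehat M^a)$-orbits of $\widehat K$-embeddings $\widehat L\to\widehat M^a$ uses that $\widehat L/\widehat K$ is again finite separable (again by Corollary~\ref{gen}), which is true but should be said.
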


In Theorem~\ref{global} we have given a classification of the union of the sets $\mathcal{V}_i$. However, in some situations one needs control of each individual $\mathcal{V}_i$. Our next  
result resolves this question in a more general setting. The special case $L=M=K$ of the following result is classical. 

\begin{thm}\label{local}
Let $F/K$ be a field extension, and let $L, M$ be fields between $K$ and $F$ such that both $L/K$ 
and $F/M$ are finite. Let $\abs{\,\cdot\,}_L$ and $\abs{\,\cdot\,}_M$ be absolute values on $L$ and 
$M$ respectively which extend the same absolute value $\abs{\,\cdot\,}_K$ on $K$. Let $\mathcal{E}$ 
be the set of all $M$-algebra embeddings $\alpha\colon F \to \widehat{M}^a$ for which there exists a 
$\widehat K$-algebra embedding $\beta\colon\widehat L\to\widehat{M}^a$ such that $\alpha$ and $\beta$ 
have the same restriction to $L$. Then there is a canonical bijection between
\begin{enumerate}
\item the set of absolute values on $F$ which extend both $\abs{\,\cdot\,}_L$ and $\abs{\,\cdot\,}_M$, and
\item the set of equivalence classes of $\mathcal E$ up to composition with $\Aut_{\widehat M}(\widehat{M}^a)$.
\end{enumerate}
\end{thm}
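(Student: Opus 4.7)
My plan is to derive the theorem by refining the classical case $L=M=K$ applied to the finite extension $F/M$. That classical result furnishes a bijection between absolute values on $F$ extending $\abs{\,\cdot\,}_M$ and $M$-algebra embeddings $\alpha\colon F\to\widehat M^a$ modulo composition with $\Aut_{\widehat M}(\widehat M^a)$; explicitly, the absolute value attached to $\alpha$ is $x\mapsto\abs{\alpha(x)}$ computed via the unique extension of $\abs{\,\cdot\,}_M$ to $\widehat M^a$. Consequently it suffices to identify the $\alpha$'s whose induced absolute value restricts to $\abs{\,\cdot\,}_L$ on $L$, and to check that this subset is stable under the $\Aut_{\widehat M}(\widehat M^a)$-action.

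The heart of the argument is the equivalence: $\abs{\alpha(x)}=\abs{x}_L$ for all $x\in L$ if and only if $\alpha\in\mathcal E$. For the ``only if'' direction, the hypothesis makes $\alpha|_L\colon L\to\widehat M^a$ an isometry, so by the universal property of completion it extends to a continuous ring homomorphism $\beta\colon\widehat L\to\widehat M^a$; this $\beta$ is automatically $\widehat K$-linear, because its restriction to $\widehat K$ is a continuous extension of the canonical map $K\hookrightarrow\widehat M$, and it is injective since $\widehat L$ is a field. For the ``if'' direction I first want to establish that $\widehat L/\widehat K$ is finite: taking a $K$-basis $e_1,\dots,e_n$ of $L$, the $\widehat K$-span $\sum \widehat K e_i$ inside $\widehat L$ is a finite-dimensional integral $\widehat K$-algebra, hence a field, hence complete, hence closed in $\widehat L$, and it contains the dense subset $L$, so it equals $\widehat L$. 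Because $\widehat K$ is complete and $\widehat L/\widehat K$ is finite, $\abs{\,\cdot\,}_L$ is the unique absolute value on $\widehat L$ extending $\abs{\,\cdot\,}_K$; therefore any $\widehat K$-algebra embedding $\beta\colon\widehat L\to\widehat M^a$ pulls back the absolute value on $\widehat M^a$ to $\abs{\,\cdot\,}_L$, and combining this with $\alpha|_L=\beta|_L$ gives $\abs{\alpha(x)}=\abs{x}_L$ on $L$.

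It remains only to observe that $\mathcal E$ is stable under post-composition by $\Aut_{\widehat M}(\widehat M^a)$: if $\beta$ witnesses $\alpha\in\mathcal E$ and $\sigma\in\Aut_{\widehat M}(\widehat M^a)$, then $\sigma\circ\beta$ is a $\widehat K$-algebra embedding witnessing $\sigma\circ\alpha\in\mathcal E$. Combined with the preceding equivalence, the bijection from the classical case restricts to the desired bijection between absolute values on $F$ extending both $\abs{\,\cdot\,}_L$ and $\abs{\,\cdot\,}_M$ and equivalence classes in $\mathcal E$. I expect the main obstacle to be the ``if'' direction of the core equivalence, specifically the lemma $\widehat L=\widehat K\cdot L$ which promotes $\widehat L$ to a finite extension of $\widehat K$; this is precisely where the finiteness of $L/K$ enters the argument beyond its role in the classical theorem applied to $F/M$.
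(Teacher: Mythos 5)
Your argument is correct in substance and takes a genuinely different route from the paper's. The paper constructs the bijection from scratch: it defines a map $\Phi\colon\mathcal E\to\mathcal V$ directly, then proves surjectivity and proves that the fibers are exactly the $\Aut_{\widehat M}(\widehat{M}^a)$-orbits, each by a completion-plus-universal-property argument. You instead quote the classical $L=M=K$ case for the finite extension $F/M$ as a black box, which already furnishes the bijection between absolute values on $F$ extending $\abs{\,\cdot\,}_M$ and $\Aut_{\widehat M}(\widehat{M}^a)$-orbits of $M$-embeddings $F\to\widehat{M}^a$, and you reduce the theorem to a single ``core equivalence'': the induced absolute value restricts to $\abs{\,\cdot\,}_L$ on $L$ if and only if $\alpha\in\mathcal E$. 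This is more modular and cleanly separates the new content from the classical content; the paper's version is more self-contained, which matters since the paper later derives the classical result (Corollary~\ref{incext}) \emph{from} Theorem~\ref{local}. Note also that your intermediate lemma $\widehat L=\widehat K\cdot L$ is exactly the paper's Corollary~\ref{gen}, which the paper also uses; your proof of it (a finite-dimensional $\widehat K$-subalgebra of the field $\widehat L$ is a field, hence complete, hence closed, hence all of $\widehat L$ by density) is fine.

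There is one technical point you should patch in the ``only if'' direction of the core equivalence. You say that since $\alpha|_L\colon L\to\widehat{M}^a$ is an isometry, the universal property of completion extends it to $\beta\colon\widehat L\to\widehat{M}^a$. But the universal property of completion requires the target to be \emph{complete}, and $\widehat{M}^a$ is in general not complete (e.g.\ $\mathbb{Q}_p^a$ is not complete). The fix is immediate: since $F/M$ is finite, $\alpha(L)\subseteq\alpha(F)\subseteq\widehat{M}(\alpha(F))$, and $\widehat{M}(\alpha(F))$ is a finite extension of the complete field $\widehat M$, hence complete by Proposition~\ref{cext}. Apply the universal property with target $\widehat{M}(\alpha(F))$, then compose with $\widehat{M}(\alpha(F))\hookrightarrow\widehat{M}^a$. (The paper's Lemma~\ref{surjective} handles the analogous point carefully by first forming $\widehat F$, observing it is finite over $\widehat M$, and only then embedding it into $\widehat{M}^a$.) With that patch, your proof is complete.
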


Abhyankar's lemma on ramification indices in a compositum of field extensions has proven to be a valuable tool in many settings, and the literature contains many variants of this result.
We provide a common generalization of most such variants, which is both widely applicable and easy to state.
We recall the standard fact that if $N/K$ is a finite-degree field extension, and $K$ is complete with 
respect to a non-archimedean absolute value $\abs{\,\cdot\,}_K$, then there is a unique extension of 
$\abs{\,\cdot\,}_K$ to a non-archimedean absolute value $\abs{\,\cdot\,}_N$ on $N$. We write $\mybar{N}$ 
and $\mybar{K}$ for the residue fields of $N$ and $K$ with respect to $\abs{\,\cdot\,}_N$. For fields 
$U,V$ with $K\le V\le U\le N$, we write $e(U/V)$ for the ramification index of $U/V$ with respect to 
$\abs{\,\cdot\,}_N$, namely the index $[\abs{U^*}_N:\abs{V^*}_N]$ of $\abs{V^*}_N$ in $\abs{U^*}_N$ 
as subgroups of $\mathbb{R}^*$.

\begin{prop}\label{Abhyankar}
Let $N/K$ be a finite Galois extension, and let $L$, $M$, $F$ be fields between $K$ and $N$ such that 
$F = L.M$. Suppose $\abs{\,\cdot\,}_K$ is a complete discrete absolute value on $K$ such that $L/K$ 
is tamely ramified and $\mybar{N} / \mybar{K}$ is separable. Then $e(N/F) = \gcd(e(N/L), e(N/M))$. 
\end{prop}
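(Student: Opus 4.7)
My plan is to translate the desired identity $e(N/F) = \gcd(e(N/L), e(N/M))$ into a statement about intersections of inertia subgroups inside $\Gal(N/K)$, and then exploit the standard structure of the inertia group combined with the tameness of $L/K$. Since $K$ is complete and $N/K$ is finite, $\abs{\,\cdot\,}_K$ extends uniquely to $N$, and its restriction to any intermediate field is the unique extension there. Let $G = \Gal(N/K)$ and, for $X \in \{L,M,F\}$, let $H_X = \Gal(N/X)$; the Galois correspondence together with $F = L.M$ gives $H_F = H_L \cap H_M$. Let $I \subseteq G$ be the inertia group of $N/K$. Separability of $\mybar{N}/\mybar{K}$ yields $\abs{I} = e(N/K)$, and since each intermediate $X$ is also complete, the same fact applied to $N/X$ gives $e(N/X) = \abs{I \cap H_X}$. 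So the claim reduces to the group-theoretic identity
\[
\abs{I \cap H_L \cap H_M} = \gcd\bigl(\abs{I \cap H_L},\abs{I \cap H_M}\bigr).
\]

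Next I will invoke the standard structural description of $I$ in the separable-residue case: the wild inertia $P \lhd I$ is the unique Sylow $p$-subgroup, where $p = \charp\mybar{K}$, and $I/P$ is cyclic of order coprime to $p$. Tameness of $L/K$ says that $[I : I \cap H_L] = e(L/K)$ is coprime to $p$, from which I will deduce $P \subseteq I \cap H_L$. This is the only place the tameness hypothesis enters the argument, and I expect it to be its conceptual heart: tameness asymmetrically pushes the wild part of $I$ into $H_L$, while imposing no corresponding constraint on $H_M$.

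Setting $A := I \cap H_L$ and $B := I \cap H_M$, it remains to show $\abs{A \cap B} = \gcd(\abs{A},\abs{B})$ under the structural hypotheses $P \subseteq A$ and $I/P$ cyclic of prime-to-$p$ order. I plan to handle the $p$-part and the $p'$-part separately. For the $p$-part, $P \subseteq A$ makes $P$ the Sylow $p$-subgroup of $A$, and $B \cap P$ is a Sylow $p$-subgroup of both $B$ and $A \cap B$ (using $A \cap B \cap P = B \cap P$), so $\abs{A \cap B}_p = \abs{B}_p = \min(\abs{A}_p,\abs{B}_p)$. For the $p'$-part, writing $\pi \colon I \to I/P$ for the projection, one verifies $\pi(A \cap B) = \pi(A) \cap \pi(B)$: the nontrivial inclusion uses $P \subseteq A$, since $\pi(a) = \pi(b)$ with $a \in A$, $b \in B$ gives $ab^{-1} \in P \subseteq A$, hence $b \in A \cap B$. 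Because $I/P$ is cyclic, intersections of subgroups have order equal to the gcd of the orders, and combining this with the $p$-part computation finishes the proof.

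The hardest step, such as it is, is really the clean invocation of the classical facts in the setup: that $\abs{I} = e(N/K)$ holds under mere separability of $\mybar{N}/\mybar{K}$, that $I \cap H_X$ is the inertia group of $N/X$ in the complete case, and that the wild inertia forms the unique normal Sylow $p$-subgroup of $I$ with cyclic prime-to-$p$ quotient. Once these are recorded, the rest is elementary subgroup bookkeeping, and the asymmetric role of $L$ versus $M$ in the hypothesis turns out to be exactly what the $p$-part versus $p'$-part argument needs.
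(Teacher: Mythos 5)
Your proof is correct and is essentially the same as the paper's: both translate the claim into the group identity $\abs{G_0(N/F)} = \gcd(\abs{G_0(N/L)},\abs{G_0(N/M)})$ via $G_0(N/X) = G_0(N/K)\cap\Gal(N/X)$, split the computation between the wild part $G_1(N/K)$ and the cyclic quotient $G_0/G_1$, and use tameness of $L/K$ in exactly the same place to force $G_1(N/K)\subseteq G_0(N/L)$, which drives both the $p$-part count and the surjectivity of the restricted quotient map onto $\Phi(G_0(N/L))\cap\Phi(G_0(N/M))$. The only cosmetic difference is that the paper records the characteristic-zero case separately rather than as a degenerate $P=1$ instance of the Sylow argument, which you might want to note explicitly since ``Sylow $p$-subgroup'' is not meaningful when $\charp\mybar{K}=0$.
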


We view Proposition~\ref{Abhyankar} as indicating the key reason why Abkyankar's lemma is true.  Although the form of Proposition~\ref{Abhyankar} differs from that of previous versions of Abhyankar's lemma, Proposition~\ref{Abhyankar} immediately implies the following result of a more familiar form:

\begin{cor}\label{old-abh-intro}
Let $F/K$ be a finite separable extension, and let $L,M$ be fields between $K$ and $F$ with $F = L.M$. 
Suppose $\abs{\,\cdot\,}_K$ is a complete discrete absolute value on $K$ such that $L/K$ is tamely ramified and the residue field of $K$ is perfect.
Then $e(F/K) = \lcm(e(L/K), e(M/K))$.
\end{cor}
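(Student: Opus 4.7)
The plan is to reduce Corollary~\ref{old-abh-intro} to Proposition~\ref{Abhyankar} by passing to the Galois closure. Take $N$ to be a Galois closure of $F$ over $K$ inside a separable closure of $K$; since $F/K$ is finite separable, $N/K$ is finite Galois. Because $K$ is complete with respect to the non-archimedean absolute value $\abs{\,\cdot\,}_K$, this absolute value extends uniquely to $N$ and hence to each intermediate field, so the ramification indices $e(N/K)$, $e(N/F)$, $e(N/L)$, and $e(N/M)$ are all unambiguously defined with respect to the unique extension on $N$, and transitivity of the value-group index yields
\[
e(N/K) = e(N/F)\,e(F/K) = e(N/L)\,e(L/K) = e(N/M)\,e(M/K).
\]

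Next I would verify the hypotheses of Proposition~\ref{Abhyankar} for this $N$. We have $F = L.M$ by hypothesis, and $L/K$ is tamely ramified by hypothesis. Finally, since $\mybar{K}$ is perfect, every finite extension of $\mybar{K}$ is separable, so in particular $\mybar{N}/\mybar{K}$ is separable. Proposition~\ref{Abhyankar} therefore applies and gives
\[
e(N/F) = \gcd\bigl(e(N/L),\, e(N/M)\bigr).
\]

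Substituting the relations from the first display and using the elementary identity $\gcd(A/a,\, A/b) = A/\lcm(a,b)$, valid whenever $a$ and $b$ both divide $A$, we obtain
\[
e(N/F) \;=\; \gcd\!\left(\frac{e(N/K)}{e(L/K)},\ \frac{e(N/K)}{e(M/K)}\right) \;=\; \frac{e(N/K)}{\lcm\bigl(e(L/K),\,e(M/K)\bigr)},
\]
which combined with $e(N/K) = e(N/F)\,e(F/K)$ yields the desired $e(F/K) = \lcm(e(L/K), e(M/K))$.

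There is no substantial obstacle beyond Proposition~\ref{Abhyankar} itself, which carries all the content. The only minor points requiring care are that the ramification indices are well defined throughout (handled by completeness of $K$, which forces uniqueness of the extension of $\abs{\,\cdot\,}_K$ to $N$) and that $\mybar{N}/\mybar{K}$ is separable (handled by the perfectness hypothesis on $\mybar{K}$); both reductions are immediate.
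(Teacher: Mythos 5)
Your proof is correct and follows exactly the paper's route: pass to the Galois closure $N$, note that perfectness of $\mybar{K}$ gives separability of $\mybar{N}/\mybar{K}$, invoke Proposition~\ref{Abhyankar} for $e(N/F) = \gcd(e(N/L),e(N/M))$, and translate to $\lcm$ via transitivity of ramification indices. The paper compresses the final translation into the phrase ``which is equivalent to,'' whereas you spell out the identity $\gcd(A/a, A/b) = A/\lcm(a,b)$; this is the same argument with the bookkeeping made explicit.
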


Corollary~\ref{old-abh-intro} implies several versions of Abhyankar's lemma in the literature, including the cases of function fields \cite[Thm.~3.9.1]{St}, number fields \cite[p.~229]{Na}, and Dedekind domains \cite[Thm.~2.1]{CH}, as well as the number-theoretic result in \cite{T}.  There are also variants of Abhyankar's lemma in other settings, such as \cite[Lemme~3.6 in Exp.~10]{G} when the residue field extension may be inseparable, \cite[Lemma~15.105.4]{stack} for formally smooth $\mathfrak{m}$-adic topologies, \cite{DK} for transcendental extensions and henselizations, and
\cite{An} for perfectoid spaces.  Proposition~\ref{Abhyankar} has the desirable feature of being sufficiently general to apply to questions in many different branches of math, while also having a concise and easily understood statement and a short self-contained proof.  

Our results have the following geometric consequence.

\begin{cor} \label{df}
Let $\phi_1\colon C_1\to D$ and $\phi_2\colon C_2\to D$ be finite separable morphisms between smooth projective curves over an algebraically closed field $k$.  For any $S\in D(k)$, pick $P_j\in \phi_j^{-1}(S)$ such that at least one $P_j$ is tamely ramified over $S$.  Let $X_1,\dots,X_\ell$ be the components of the normalization of the fibered product $C_1\times_D C_2$, and let $\pi_{i,j}$ be the induced morphism $X_i\to C_j$ for $1\le i\le\ell$ and $1\le j\le 2$.  Then \[
\sum_{i=1}^{\ell}\, \bigl\lvert\pi_{i,1}^{-1}(P_1)\cap \pi_{i,2}^{-1}(P_2)\bigr\rvert=\gcd\bigl(e(P_1/S), e(P_2/S)\bigr).
\]
\end{cor}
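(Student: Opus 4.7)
The plan is to translate the geometric count into a count of absolute value extensions on the function fields of the $X_i$, invoke Theorem~\ref{global}\eqref{count}, and then compute the resulting orbit count via Kummer theory together with Corollary~\ref{old-abh-intro}. Throughout let $K=k(D)$, $L_j=k(C_j)$, and $e_j=e(P_j/S)$, with $p=\charp(k)$; by symmetry I may assume $p\nmid e_1$ and set $L=L_1$, $M=L_2$.

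The first step is the geometric-to-algebraic dictionary. Since $\phi_1$ is finite separable, $L\otimes_K M\cong F_1\times\cdots\times F_n$ as in Theorem~\ref{global}. Each irreducible component of $C_1\times_D C_2$ dominates $D$ (the structure morphism is finite and the fibered product is one-dimensional), and its function field is one of the $F_i$; hence $\ell=n$, $k(X_i)=F_i$, and the maps $\pi_{i,1},\pi_{i,2}$ induce the canonical $L$- and $M$-algebra structures on $F_i$. Under these identifications, points of $X_i$ mapping to $P_1$ under $\pi_{i,1}$ correspond to places of $F_i$ extending the $P_1$-adic place of $L$, and similarly for $P_2$, so $\abs{\pi_{i,1}^{-1}(P_1)\cap\pi_{i,2}^{-1}(P_2)}=\abs{\mathcal V_i}$. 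Summing over $i$ and invoking Theorem~\ref{global}\eqref{count}, the left-hand side of the corollary equals the number of $\Aut_{\widehat M}(\widehat{M}^a)$-orbits on the set of $\widehat K$-algebra embeddings $\widehat L\to\widehat{M}^a$.

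To compute this orbit count, observe that $\widehat K=k((t))$ for a uniformizer $t$ at $S$ and that $\widehat L/\widehat K$ is totally ramified of degree $e_1$. Because $p\nmid e_1$ and $k$ contains the $e_1$-th roots of unity, a uniformizer $\pi\in\widehat L$ can be chosen with $\pi^{e_1}=t$, so $\widehat L=\widehat K(\pi)$ is the splitting field of $x^{e_1}-t$ over $\widehat K$ and hence Galois over $\widehat K$. Therefore every $\widehat K$-embedding $\widehat L\to\widehat{M}^a$ has the same image, which I call $\widehat L'$. Applying Corollary~\ref{old-abh-intro} to the tame extension $\widehat L'/\widehat K$ (with perfect residue field $k$) yields $e(\widehat L'\cdot\widehat M/\widehat K)=\lcm(e_1,e_2)$, which equals $[\widehat L'\cdot\widehat M:\widehat K]$ because all residue degrees are $1$; dividing by $[\widehat M:\widehat K]=e_2$ gives $[\widehat L'\cdot\widehat M:\widehat M]=e_1/\gcd(e_1,e_2)$. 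Since the $e_1$ embeddings all share the image $\widehat L'$, every orbit has this common size, so the orbit count is $e_1/(e_1/\gcd(e_1,e_2))=\gcd(e_1,e_2)=\gcd(e(P_1/S),e(P_2/S))$, as desired. The main obstacle lies in the geometric-to-algebraic translation: one must match the irreducible components of the possibly non-irreducible fibered product with the factors $F_i$ of $L\otimes_K M$ in a way compatible with the $L$- and $M$-algebra structures invoked by Theorem~\ref{global}. Once that is done, the orbit count is a quick consequence of tameness plus Abhyankar's lemma.
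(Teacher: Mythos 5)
Your proposal is correct, and it takes a genuinely different route from the paper. The paper does not invoke Theorem~\ref{global}\eqref{count} at all here: it proves the algebraic form (Corollary~\ref{num-p}) by combining the degree identity of Theorem~\ref{global}\eqref{degree} with Proposition~\ref{e&f} to obtain $\sum_i\sum_{Q\in\mathcal{P}_i} e(Q/S)=e(P_1/S)\,e(P_2/S)$ (Corollary~\ref{sum-e}), and then applies Corollary~\ref{old-abh} to each $F_i$ to see that every summand equals $\lcm\bigl(e(P_1/S),e(P_2/S)\bigr)$; dividing gives the $\gcd$. You instead use part~\eqref{count} and count $\Aut_{\widehat M}(\widehat M^a)$-orbits of embeddings $\widehat L\to\widehat M^a$ directly, exploiting the Kummer-theoretic description of a tame totally ramified extension over a complete field with algebraically closed residue field to see that $\widehat L/\widehat K$ is cyclic Galois, so all embeddings have the same image $\widehat L'$ and all orbits have size $[\widehat L'\widehat M:\widehat M]$, which you compute via Corollary~\ref{old-abh-intro}. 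The underlying arithmetic $e_1e_2/\lcm(e_1,e_2)=e_1\big/\bigl(e_1/\gcd(e_1,e_2)\bigr)=\gcd(e_1,e_2)$ is of course the same, and both arguments lean on the same Abhyankar input, but the paper's version avoids needing the structure theory of tame extensions and is slightly shorter, while yours explains more concretely what the orbit set in Theorem~\ref{global}\eqref{count} looks like. One small point you glide over: to identify each orbit size with $[\widehat L'\widehat M:\widehat M]$ you implicitly use that $\widehat L'\widehat M/\widehat M$ is Galois, which holds because $\widehat L'/\widehat K$ is Galois and $\widehat K\subseteq\widehat M$; it is worth saying so explicitly. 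Your geometric-to-algebraic dictionary (each component of $C_1\times_D C_2$ dominates $D$ since the projections are finite flat, so the generic fibre is $\Spec(L\otimes_K M)$ and $\ell=n$, $k(X_i)=F_i$) is also correct and is the step the paper leaves implicit.
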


Corollary~\ref{df} is useful in various contexts, since when combined with Abhyankar's lemma it describes the ramification of the projection maps $\pi_{i,j}$ in terms of the ramification of the maps $\phi_j$.  A recent application of this to value distribution of meromorphic functions is in \cite{SZ}.  Other applications of the resulting formula for the genus of $X_i$ in case each $\phi_j$ is tamely ramified are in \cite{AZ,BT,OZ,R} and other papers.  Many authors have proved special cases of Corollary~\ref{df}, for instance \cite{R,DW,F,DF}.  Compared to these previous results, the advantages of Corollary~\ref{df} are its greater generality and its short but fully detailed proof, especially regarding the case $\ell>1$ for which ours is the first rigorous proof in the literature.

This paper is organized as follows. In Section 2, we recall some standard facts about absolute 
values. In the next three sections we prove Theorem~\ref{global}, 
Theorem~\ref{local}, and Proposition~\ref{Abhyankar} respectively. We conclude in Section 6 with 
several consequences which generalize some known results in the literature, including in particular 
a generalization of a result counting points on the fibered product of two curve morphisms.


\section{Preliminaries}

In this section we recall some standard facts about absolute values which will be used in this paper; 
for the proofs see for instance \cite{C,L,S}. For the general theory of absolute values see any of
\cite{A,AM,B,BGR,C,CF,EP,L,Ne,Ribenboim,S,ZS}.

By a \emph{completion} of a field $K$ with respect to an absolute value $\abs{\,\cdot\,}_K$ we mean 
 a triple $(\widehat{K}, \abs{\,\cdot\,}_{\widehat{K}}, \iota)$ in which $\widehat{K}$ is a field, 
$\abs{\,\cdot\,}_{\widehat{K}}$ is an absolute value on $\widehat{K}$, and $\iota \colon K \to \widehat{K}$ 
is an embedding of fields, such that 
\begin{enumerate}
\item 
$\abs{\,\cdot\,}_{\widehat{K}}$ extends $\abs{\,\cdot\,}_K$ along the embedding 
$\iota \colon K \to \widehat{K}$, in the sense that
$\abs{x}_K = \abs{\iota(x)}_{\widehat{K}}$ for all $x\in K$,
\item $\widehat{K}$ is complete with respect to the absolute value $\abs{\,\cdot\,}_{\widehat{K}}$,
\item $K$ is dense in $\widehat{K}$ with respect to the topology induced by $\abs{\,\cdot\,}_{\widehat{K}}$.
\end{enumerate} 

There always exists a completion $(\widehat{K}, \abs{\,\cdot\,}_{\widehat{K}}, \iota)$ for any field $K$ with 
an absolute value $\abs{\,\cdot\,}_K$. Moreover, any completion $(\widehat{K}, \abs{\,\cdot\,}_{\widehat{K}}, \iota)$ 
has the \emph{universal property}: if $\abs{\,\cdot\,}_L$ is a complete absolute value on a field $L$, and 
$\rho \colon K \to L$ is an embedding of fields along which $\abs{\,\cdot\,}_L$ extends $\abs{\,\cdot\,}_K$, 
then there exists a unique embedding $\theta \colon \widehat{K} \to L$ which preserves both the absolute 
values and the $K$-algebra structures, in the sense that $\abs{x}_{\widehat{K}} = \abs{ \theta(x) }_L$ for all 
$x \in \widehat{K}$ and $\rho = \theta \circ \iota$. Therefore, any two completions are canonically isomorphic. 
For completions of absolute values see \cite[Prop.~XII.2.1]{L} or \cite[\S 2.4]{C}. 

The following result is essentially 
\cite[Prop.~XII.2.5 and XII.2.6]{L} and \cite[Thm.~7.1.1]{C}.

\begin{prop}\label{cext}
Suppose that $K$ is a field which is complete with respect to an absolute value $\abs{\,\cdot\,}_K$, and $L$ 
is an algebraic extension field over $K$. Then there exists a unique absolute value $\abs{\,\cdot\,}_L$ on 
$L$ which extends the absolute value $\abs{\,\cdot\,}_K$ on $K$. If in addition the degree $n := [L : K]$ 
is finite, then $L$ is complete with respect to $\abs{\,\cdot\,}_L$, and $\abs{x}_L = \abs{N_{L/K}(x)}_K^{1/n}$ 
for all $x \in L$ where $N_{L/K}$ is the norm from $L$ to $K$.
\end{prop}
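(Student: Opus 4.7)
The plan is to handle the archimedean and non-archimedean cases separately, and within each case to reduce the general algebraic extension to finite ones by observing that every $x\in L$ lies in the finite subextension $K(x)/K$; compatibility of the resulting extensions on different finite subextensions is automatic from uniqueness.

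In the archimedean case, Ostrowski's theorem identifies $\widehat K$ topologically with $\mathbb{R}$ or $\mathbb{C}$ equipped with a power of the usual absolute value, and every algebraic extension embeds into $\mathbb{C}$; existence, uniqueness, completeness, and the norm formula then all follow from elementary properties of complex absolute values.

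For the non-archimedean case with $[L:K]=n$ finite, I would define $\abs{x}_L:=\abs{N_{L/K}(x)}_K^{1/n}$ and verify that this is an absolute value. Multiplicativity and positive-definiteness are immediate from the corresponding properties of $N_{L/K}$. The crux is the ultrametric triangle inequality, which via multiplicativity reduces to showing that $\abs{x}_L\le 1$ implies $\abs{1+x}_L\le 1$; equivalently, the subset $S:=\{x\in L:\abs{N_{L/K}(x)}_K\le 1\}$ must coincide with the integral closure $\mathcal{O}_L$ of the valuation ring $\mathcal{O}_K$ of $K$ in $L$. The inclusion $\mathcal{O}_L\subseteq S$ is immediate since integral elements have integral norms. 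The reverse inclusion is the main obstacle: given $x\in S$ with monic minimal polynomial $f(T)=T^m+a_{m-1}T^{m-1}+\dots+a_0\in K[T]$, one has $\abs{a_0}_K\le 1$, and the task is to deduce $f(T)\in\mathcal{O}_K[T]$. This step uses completeness of $K$ essentially, via a Hensel-type factorization argument: if some $a_i\notin\mathcal{O}_K$, then after multiplying $f$ by a suitable scalar one obtains a polynomial in $\mathcal{O}_K[T]$ whose reduction modulo the maximal ideal factors as a product of two nonconstant coprime polynomials (since both the top and bottom coefficients of the reduction vanish while some intermediate coefficient does not), and Hensel's lemma then lifts this factorization to $K[T]$, contradicting irreducibility of $f$.

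Uniqueness and completeness both rest on the classical fact that any two norms on a finite-dimensional vector space over a complete nontrivially valued field are equivalent and induce the product topology coming from $L\cong K^n$. For uniqueness, a second extension $\abs{\,\cdot\,}_L'$ satisfies $\abs{\,\cdot\,}_L'=\abs{\,\cdot\,}_L^c$ for some $c>0$ by equivalence, and agreement with $\abs{\,\cdot\,}_K$ on any element of $K$ of nontrivial absolute value forces $c=1$. For completeness, $L$ is homeomorphic to $K^n$ with the product topology, and a finite product of complete metric spaces is complete. Once the integrality step above is established, these linear-algebraic inputs deliver the remaining assertions cleanly.
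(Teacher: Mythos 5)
The paper does not prove Proposition~\ref{cext}; it cites \cite[Prop.~XII.2.5 and XII.2.6]{L} and \cite[Thm.~7.1.1]{C} and moves on. So there is no ``paper's proof'' to compare against: your task here is to supply a correct proof of a standard result, and you have done so. Your route --- split into archimedean (Ostrowski) and non-archimedean cases, define $\abs{x}_L := \abs{N_{L/K}(x)}_K^{1/n}$, reduce the ultrametric inequality to the statement that $\{x : \abs{N_{L/K}(x)}_K \le 1\}$ equals the integral closure of $\mathcal{O}_K$ in $L$, prove the hard inclusion by a Hensel factorization argument, and get uniqueness and completeness from equivalence of norms on finite-dimensional spaces over complete fields --- is precisely the proof that appears in the cited references, so the proposal is correct and consistent with what the paper relies on.

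Two small points to tighten if you write this out in full. First, the equivalence-of-norms theorem you invoke requires $\abs{\,\cdot\,}_K$ to be nontrivial, and your step ``agreement with $\abs{\,\cdot\,}_K$ on any element of $K$ of nontrivial absolute value forces $c=1$'' likewise needs such an element to exist. The statement of Proposition~\ref{cext} does not exclude the trivial absolute value, which is automatically complete; you should dispose of that case separately (every extension of the trivial absolute value to an algebraic extension is trivial, by considering the minimal polynomial and the dominant term, and the norm formula and completeness are then immediate). Second, in the Hensel step, after dividing $f$ by the coefficient $a_j$ of maximal absolute value, the resulting polynomial $g \in \mathcal{O}_K[T]$ has leading coefficient of absolute value strictly less than $1$, so $\deg \bar g < \deg g$ and the reduction $\bar g = T^r\bar h_0$ may have $\bar h_0$ constant (when $r=s$); your phrase ``two nonconstant coprime polynomials'' is slightly off. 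What matters is that $\bar g_0 := T^r$ is monic of degree $r$ with $1 \le r < \deg g$, so the version of Hensel's lemma that lifts a monic coprime factor applies and produces a monic factor of degree $r$ and a cofactor of positive degree $\deg g - r$, giving the desired contradiction with irreducibility. With these two points addressed, the argument is complete.
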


By Proposition~\ref{cext} and the universal property of a completion, we have the following result which is \cite[Prop.~XII.3.1]{L}.

\begin{cor}\label{gen}
Suppose $L/K$ be a finite extension of fields, $\abs{\,\cdot\,}_K$ is an absolute value on $K$, and 
$\abs{\,\cdot\,}_L$ is an extension of $\abs{\,\cdot\,}_K$ to $L$. Then $\widehat{L} = L. \widehat{K}$.
\end{cor}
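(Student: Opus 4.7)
The plan is to recognize $\widehat{L}$ as already containing a copy of $\widehat{K}$, so that $L.\widehat{K}$ makes sense as a subfield of $\widehat{L}$, and then to show this subfield is both complete and dense in $\widehat{L}$.

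First I would apply the universal property of $\widehat{K}$ to the composite embedding $K\hookrightarrow L\hookrightarrow\widehat{L}$, which preserves $\abs{\,\cdot\,}_K$ because $\abs{\,\cdot\,}_L$ extends $\abs{\,\cdot\,}_K$ and $\abs{\,\cdot\,}_{\widehat{L}}$ extends $\abs{\,\cdot\,}_L$; since the target is complete, this yields a unique $K$-algebra embedding $\widehat{K}\hookrightarrow\widehat{L}$ preserving absolute values. Identifying $\widehat{K}$ and $L$ with their images in $\widehat{L}$, I may form the compositum $L.\widehat{K}$ inside $\widehat{L}$.

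Next I would show this compositum is already complete. Since $L/K$ is finite, so is $L.\widehat{K}/\widehat{K}$, and Proposition~\ref{cext} then guarantees that $L.\widehat{K}$ is complete with respect to the unique absolute value on it extending $\abs{\,\cdot\,}_{\widehat{K}}$. The restriction of $\abs{\,\cdot\,}_{\widehat{L}}$ to $L.\widehat{K}$ is one such extension, so by the uniqueness clause it must coincide with the absolute value supplied by the proposition; hence $L.\widehat{K}$ is a complete subfield of $\widehat{L}$.

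Finally, $L.\widehat{K}$ contains $L$, and $L$ is dense in $\widehat{L}$, so any complete subfield containing $L$ must equal $\widehat{L}$, giving $L.\widehat{K}=\widehat{L}$. The only point that requires real care is the appeal to uniqueness in the second step, which is exactly where Proposition~\ref{cext} does the essential work; beyond this, the argument is a straightforward assembly of the universal property of completions with the density of $L$ in $\widehat{L}$.
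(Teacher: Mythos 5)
Your proof is correct and fills in, in the natural way, exactly the terse argument the paper sketches: it invokes the universal property of $\widehat{K}$ to embed $\widehat{K}$ into $\widehat{L}$, then uses Proposition~\ref{cext} (uniqueness and completeness of the extended absolute value over the complete field $\widehat{K}$) to see that $L.\widehat{K}$ is complete, hence closed, and finally concludes by density of $L$. These are precisely the two ingredients the paper cites, so this is the same route, carried out carefully.
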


Let $\abs{\,\cdot\,}$ be a non-archimedean absolute value on a field $K$. Then the set 
$\mathfrak{O} := \{ x\in K \colon \abs{x} \le 1 \}$ forms a valuation ring of $K$ with the unique 
maximal ideal $\mathfrak{P} := \{ x\in K \colon \abs{x} < 1 \}$. We say that $\mathfrak{O}$ is the 
\emph{ring of integers} in $K$ and the quotient $\mybar{K} := \mathfrak{O} / \mathfrak{P}$ is the 
\emph{residue field} of $K$. 

Suppose $L/K$ is any extension of fields, $\abs{\,\cdot\,}_K$ is an absolute value on $K$, and 
$\abs{\,\cdot\,}_L$ is an extension of $\abs{\,\cdot\,}_K$ to $L$. The \emph{ramification index} $e$ 
is the index $[\,\abs{L^*}_L : \abs{K^*}_K\,]$ of $\abs{K^*}_K$ in $\abs{L^*}_L$ as subgroups of the 
multiplicative group $\mathbb{R}^*$. Suppose in addition $\abs{\,\cdot\,}_K$ is non-archimedean, then 
$\abs{\,\cdot\,}_L$ is also non-archimedean. The \emph{residue degree} $f$ is the degree of the 
residue field extension $\mybar{L} / \mybar{K}$. Thus $e$ and $f$ are either positive integers or $\infty$.
Moreover, we have

\begin{prop}\label{e&f}
Suppose $L/K$ be a finite extension of fields, $\abs{\,\cdot\,}_K$ is a non-archimedean absolute value 
on $K$, and $\abs{\,\cdot\,}_L$ is an absolute value on $L$ which extends $\abs{\,\cdot\,}_K$ on $K$. Then 
\begin{enumerate}
\item\label{1st} the relation $[L : K] \ge e(L/K) \cdot f(L/K)$ holds, in particular both $e(L/K)$ and 
$f(L/K)$ are positive integers. Furthermore, the equality $[L : K] = e(L/K) \cdot f(L/K)$ holds if 
$\abs{\,\cdot\,}_K$ is discrete and complete, 
\item\label{2nd} $e(L/K) = e(\widehat{L}/\widehat{K})$ and $f(L/K) = f(\widehat{L}/\widehat{K})$, where 
$\widehat{K}$ and $\widehat{L}$ are completions of $K$ and $L$ with respect to $\abs{\,\cdot\,}_K$ and 
$\abs{\,\cdot\,}_L$ respectively.
\end{enumerate}
\end{prop}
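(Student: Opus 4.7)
The plan is to dispose of part~(2) first via a short density argument, then prove the inequality in part~(1), and finally treat the equality in the complete discrete case.

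For part~(2), the key observation is the density of $L$ in $\widehat L$: given $x\in\widehat L^*$, any $y\in L$ with $|y-x|_{\widehat L}<|x|_{\widehat L}$ satisfies $|y|_L=|x|_{\widehat L}$ by the ultrametric inequality, so $|\widehat L^*|_{\widehat L}=|L^*|_L$; the same argument for $K$ gives $e(\widehat L/\widehat K)=e(L/K)$. Density plus the ultrametric inequality also shows the natural injection $\overline L\hookrightarrow\overline{\widehat L}$ is surjective (any lift of a residue in $\widehat L$ may be approximated modulo $\mathfrak P_{\widehat L}$ by an element of $\mathfrak O_L$ with the same residue), and likewise $\overline K=\overline{\widehat K}$, yielding $f(\widehat L/\widehat K)=f(L/K)$.

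For the inequality in part~(1), given positive integers $r\le e(L/K)$ and $s\le f(L/K)$ I would pick $\pi_1,\ldots,\pi_r\in L^*$ whose absolute values lie in pairwise distinct cosets of $|K^*|_K$ and $u_1,\ldots,u_s\in\mathfrak O_L$ whose residues are $\overline K$-linearly independent, and argue that the $rs$ products $\pi_i u_j$ are $K$-linearly independent. The heart of the argument is a sub-lemma: for $c_1,\ldots,c_s\in K$ not all zero, $\lvert\sum_j c_j u_j\rvert_L=\max_j|c_j|_K\in|K^*|_K$, proved by dividing out a coefficient of maximal absolute value and then reducing modulo $\mathfrak P_L$ to invoke residue independence. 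Given a relation $\sum_{i,j}c_{ij}\pi_i u_j=0$, the sub-lemma applied to $b_i:=\sum_j c_{ij}u_j$ puts each nonzero $|b_i|_L$ in $|K^*|_K$; an equality $|\pi_i b_i|_L=|\pi_k b_k|_L$ for $b_i,b_k\ne 0$ would then force $|\pi_i/\pi_k|_L\in|K^*|_K$, contrary to the coset choice of the $\pi_i$, so the nonzero $|\pi_i b_i|_L$ are pairwise distinct and the ultrametric inequality rules out $\sum_i \pi_i b_i=0$ unless every $b_i=0$; the sub-lemma then forces every $c_{ij}=0$. Since $r,s$ are arbitrary, this gives $e,f<\infty$ and $ef\le[L:K]$.

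For the equality when $|\cdot|_K$ is complete and discrete, Proposition~\ref{cext} makes $L$ complete, and the finite index $e$ forces $|L^*|_L$ to be discrete as well, so $L$ has a uniformizer $\pi_L$ with $\pi_L^e\mathfrak O_L=\pi_K\mathfrak O_L$ for some uniformizer $\pi_K$ of $K$. Taking $\pi_i=\pi_L^{i-1}$ and lifting a full $\overline K$-basis of $\overline L$ to $u_1,\ldots,u_f$, the inequality argument makes $V:=\sum_{i,j}K\,\pi_L^{i-1}u_j$ a $K$-subspace of $L$ of dimension $ef$, and the goal is $V=L$. I would expand any $x\in\mathfrak O_L$ iteratively as $\sum_j c_{0,j}u_j+\pi_L x_1$ with $c_{0,j}\in\mathfrak O_K$ and $x_1\in\mathfrak O_L$ (using $\overline L=\sum\overline K\,\overline u_j$), repeat $e$ times, and collect the $\pi_L^e$ tail into $\pi_K\mathfrak O_L$ to obtain $\mathfrak O_L=W+\pi_K\mathfrak O_L$ with $W:=\sum_{i,j}\mathfrak O_K\,\pi_L^{i-1}u_j\subseteq V$; iterating gives $\mathfrak O_L=W+\pi_K^N\mathfrak O_L$ for every $N$, so each $x\in\mathfrak O_L$ is a limit of elements of $V$. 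Since $V$ is a finite-dimensional $K$-subspace of the complete normed $K$-space $L$ it is closed, whence $\mathfrak O_L\subseteq V$ and $L=\mathfrak O_L[\pi_K^{-1}]\subseteq V$ concludes the proof. The main obstacle is this final closedness-plus-limit step, which is where both the completeness and the discreteness hypotheses are genuinely used.
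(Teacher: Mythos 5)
The paper does not actually prove Proposition~\ref{e&f}; it treats both parts as standard results and refers the reader to Lang (Prop.~XII.4.6, XII.6.1, and \S XII.5) and Cassels (Thm.~7.5.1). You have instead supplied a full self-contained argument, so there is nothing in the paper to compare step-for-step, but what you wrote is the canonical textbook proof that those references contain, and it is correct. A few remarks on the details: the density argument for part~(2) is exactly right (for $x\in\widehat L^*$ any $y\in L$ with $\lvert y-x\rvert<\lvert x\rvert$ has $\lvert y\rvert=\lvert x\rvert$, and a lift $b\in L$ of $a\in\mathfrak O_{\widehat L}$ with $\lvert b-a\rvert<1$ lands in $\mathfrak O_L$ and has the same residue). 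Your sub-lemma, that $\bigl\lvert\sum_j c_j u_j\bigr\rvert_L=\max_j\lvert c_j\rvert_K$ for residue-independent $u_j$, is the standard engine; combining it with the coset-distinctness of the $\pi_i$ and the strict ultrametric inequality for a sum with a unique maximal term gives the independence of the $rs$ products $\pi_iu_j$, hence $ef\le[L:K]$ with $e,f$ finite. For the complete discrete case your choice $\pi_i=\pi_L^{i-1}$ indeed puts $\lvert\pi_1\rvert,\dots,\lvert\pi_e\rvert$ in distinct cosets because $\lvert\pi_L\rvert$ generates the cyclic group $\lvert L^*\rvert/\lvert K^*\rvert$ of order $e$; the iterated expansion yields $\mathfrak O_L=W+\pi_K^N\mathfrak O_L$ for all $N$, and invoking that a finite-dimensional subspace of a normed space over a complete field is closed (valid since $K$ is complete) is a clean way to pass to the limit. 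You could alternatively finish by observing that $W$ is a finite free $\mathfrak O_K$-module over a complete DVR, hence itself complete, which avoids appealing to equivalence of norms, but both routes are standard. In short, your proof is correct and is the argument the paper implicitly relies on via its citations; it is not a different approach, just an explicit one.
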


Part~\eqref{1st} of Proposition~\ref{e&f} is \cite[Prop.~XII.4.6 and XII.6.1]{L}; see also
\cite[Thm.~7.5.1]{C}  for the equality in case 
$\abs{\,\cdot\,}_K$ is both discrete and complete. For part~\eqref{2nd} of Proposition~\ref{e&f} 
see \cite[\S XII.5]{L}.

Suppose $K$ is a field complete with respect to a nontrivial discrete absolute value $\abs{\,\cdot\,}_K$, 
and $L/K$ is a finite Galois extension with the Galois group $G(L/K)$. By Proposition~\ref{cext} there 
exists a unique absolute value $\abs{\,\cdot\,}_L$ on $L$ extending $\abs{\,\cdot\,}_K$. Furthermore, 
$\abs{\,\cdot\,}_L$ is discrete and $L$ is complete with respect to it. For any integer $i\ge -1$, 
the \emph{$i$-th ramification group} $G_i(L/K)$ is the subgroup of the Galois group $G(L/K)$ consisting 
of all $\sigma\in G(L/K)$ such that $\sigma(x) - x \in \mathfrak{P}_L^{i+1}$ for any $x\in \mathfrak{O}_L$. 
The groups $G_i(L/K)$ form a decreasing chain of normal subgroups of $G(L/K)$; $G_{-1}(L/K) = G(L/K)$, 
$G_0(L/K)$ is the \emph{inertia subgroup} of $G(L/K)$, and $G_i(L/K) = 1$ for sufficiently large $i$. 
For the basic knowledge of ramification groups see \cite[\S IV]{S} or \cite[\S 8.7]{C}.

The next result follows from \cite[\S I.20]{S} and the results of \cite[Ch.~IV]{S};
for a quick treatment see \cite[Thm.~8.8.1]{C}.

\begin{prop}\label{ramgp}
Suppose $K$ is a field complete with respect to a nontrivial discrete absolute value $\abs{\,\cdot\,}_K$, and 
$L/K$ is a finite Galois extension whose residue field extension $\mybar{L} / \mybar{K}$ is separable. Then
\begin{enumerate}
\item $\mybar{L} / \mybar{K}$ is a finite Galois extension, whose Galois group $G(\mybar{L}/\mybar{K})$ is 
canonically isomorphic to $G(L/K) / G_0(L/K)$. Thus the ramification index $e(L/K)$ equals the order of the 
inertia group $G_0(L/K)$, \item $G_0(L/K) / G_1(L/K)$ is canonically isomorphic to some multiplicative subgroup 
of $\mybar{L}$, $G_i(L/K) / G_{i+1}(L/K)$ is non-canonically isomorphic to some additive subgroup of $\mybar{L}$ 
for any integer $i\ge 1$, \item therefore, if $\charp(\mybar{K}) = 0$, then $G_1(L/K) = 1$ and $G_0(L/K)$ is 
a finite cyclic group; if $\charp(\mybar{K}) = p > 0$, then $G_1(L/K)$ is the unique Sylow $p$-subgroup of 
$G_0(L/K)$ and $G_0(L/K) / G_1(L/K)$ is a finite cyclic group of order coprime to $p$.
\end{enumerate}
\end{prop}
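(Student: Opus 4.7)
The plan is the classical Serre-style approach, organized around the three parts of the proposition. Since $K$ is complete and discrete with $L/K$ finite, Proposition~\ref{cext} guarantees that $L$ is complete with unique extended absolute value $\abs{\,\cdot\,}_L$, and Proposition~\ref{e&f}(1) gives $[L:K]=e(L/K)f(L/K)$. Any $\sigma\in G(L/K)$ preserves $\abs{\,\cdot\,}_L$ by uniqueness, hence preserves $\mathfrak{O}_L$ and $\mathfrak{P}_L$, and thereby descends to an automorphism $\mybar\sigma$ of $\mybar L/\mybar K$.

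For part~(1) I would show that the reduction map $G(L/K)\to\Aut(\mybar L/\mybar K)$ is surjective with kernel $G_0(L/K)$, the latter being immediate from the definition. For surjectivity, the primitive element theorem (applicable because $\mybar L/\mybar K$ is separable) gives $\bar\alpha$ with $\mybar L=\mybar K(\bar\alpha)$; applying Hensel's lemma to a monic lift in $\mathfrak{O}_K[X]$ of the separable minimal polynomial $\bar h$ of $\bar\alpha$ produces $\alpha\in\mathfrak{O}_L$ whose minimal polynomial $h$ over $K$ has degree $[\mybar K(\bar\alpha):\mybar K]=f(L/K)$ and reduces to $\bar h$. Normality of $L/K$ places all roots of $h$ in $\mathfrak{O}_L$, and comparing $h$ with $\bar h$ via Vieta's formulas forces the reductions of these roots to be precisely the distinct roots of $\bar h$; this simultaneously proves the normality of $\mybar L/\mybar K$ and shows that every automorphism of $\mybar L/\mybar K$ is realized by some element of $G(L/K)$. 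Counting orders then gives $|G_0(L/K)|=[L:K]/f(L/K)=e(L/K)$.

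For part~(2) I would fix a uniformizer $\pi\in L$. Since $\sigma\in G_0(L/K)$ preserves $\abs{\,\cdot\,}_L$, the quotient $\sigma(\pi)/\pi$ lies in $\mathfrak{O}_L^*$, and $\sigma\mapsto\overline{\sigma(\pi)/\pi}$ defines a homomorphism $G_0(L/K)\to\mybar L^*$ whose kernel is $G_1(L/K)$, after noting that membership in $G_i(L/K)$ can be tested on $\pi$ together with a lift of a primitive element of $\mybar L/\mybar K$. For $i\ge 1$ and $\sigma\in G_i(L/K)$ one has $\sigma(\pi)-\pi\in\mathfrak{P}_L^{i+1}$, and $\sigma\mapsto\overline{(\sigma(\pi)-\pi)/\pi^{i+1}}$ yields an injective homomorphism $G_i(L/K)/G_{i+1}(L/K)\hookrightarrow\mybar L^+$; the homomorphism property follows by expanding $\sigma\tau(\pi)-\pi=\sigma(\tau(\pi)-\pi)+(\sigma(\pi)-\pi)$ modulo $\mathfrak{P}_L^{i+2}$ and using that $\sigma$ acts trivially modulo $\mathfrak{P}_L^{i+1}$.

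Given part~(2), part~(3) is essentially formal. Any finite subgroup of the multiplicative group of a field is cyclic, so $G_0/G_1$ embeds as a cyclic subgroup of $\mybar L^*$. If $\charp(\mybar K)=0$, $\mybar L^+$ is torsion-free, forcing $G_i/G_{i+1}=1$ for every $i\ge 1$; since $G_i=1$ for large $i$, this gives $G_1=1$ and $G_0$ cyclic. If $\charp(\mybar K)=p>0$, each $G_i/G_{i+1}$ for $i\ge 1$ is an elementary abelian $p$-group, so $G_1$ is a $p$-group; and $|G_0/G_1|$ divides $|\mybar L^*|$, which has order coprime to $p$, making $G_1$ a normal Sylow $p$-subgroup of $G_0$. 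I expect the main technical obstacle to lie in the congruence manipulations of part~(2) --- namely verifying that the maps $\theta_i$ are well-defined homomorphisms with the stated kernels, independent (in the appropriate sense) of the chosen uniformizer. Everything else is either formal or a standard application of Hensel's lemma and the primitive element theorem.
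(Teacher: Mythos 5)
The paper does not supply its own proof of Proposition~\ref{ramgp}: it is quoted as a standard fact with references to Serre's \emph{Local Fields} (\S I.20 and Ch.~IV) and Cassels' \emph{Local Fields} (Thm.~8.8.1), so there is no in-paper argument to compare against. What you have written is a sketch of exactly that classical Serre-style argument, and the overall architecture (reduction map for part~(1), the maps $\theta_0\colon\sigma\mapsto\overline{\sigma(\pi)/\pi}$ and $\theta_i\colon\sigma\mapsto\overline{(\sigma(\pi)-\pi)/\pi^{i+1}}$ for part~(2), formal deduction for part~(3)) is correct.

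The one place that needs more than you have said is the kernel computation for $\theta_0$ (and likewise the injectivity of each $\theta_i$), and it is not quite the ``congruence manipulation'' you flag at the end. You correctly observe that membership of $\sigma$ in $G_i(L/K)$ can be tested on $\pi$ together with the Hensel lift $\alpha$ of a primitive element, because $\{\alpha^j\pi^i\}$ generates $\mathfrak{O}_L$ over $\mathfrak{O}_K$. But $\theta_0(\sigma)=1$ only tells you $\sigma(\pi)-\pi\in\mathfrak{P}_L^2$; to conclude $\sigma\in G_1(L/K)$ you also need $\sigma(\alpha)-\alpha\in\mathfrak{P}_L^2$, and for $\sigma\in G_0(L/K)$ you a priori only have $\sigma(\alpha)-\alpha\in\mathfrak{P}_L$. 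The missing observation is that $\sigma\in G_0(L/K)$ actually \emph{fixes} $\alpha$: $\sigma(\alpha)$ is a root of the minimal polynomial $h$ of $\alpha$, the roots of $h$ in $\mathfrak{O}_L$ reduce bijectively onto the distinct roots of $\bar h$ (the very fact you established in part~(1) from separability of $\bar h$), and $\overline{\sigma(\alpha)}=\bar\sigma(\bar\alpha)=\bar\alpha$ forces $\sigma(\alpha)=\alpha$. Once this is in place, the membership test for $\sigma\in G_0(L/K)$ collapses to the single element $\pi$, and both the kernel of $\theta_0$ and the injectivity of every $\theta_i$ follow. An alternative (and arguably cleaner) route is the one Serre takes: pass to the inertia field $L_0=L^{G_0(L/K)}$, over which $L$ is totally ramified so that $\mathfrak{O}_L=\mathfrak{O}_{L_0}[\pi]$, note $G_i(L/K)=G_i(L/L_0)$ for $i\ge 0$, and then the test is on $\pi$ alone because every $\sigma\in G_0(L/K)$ already fixes $\mathfrak{O}_{L_0}$ pointwise. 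One last cosmetic point: in part~(3) you write that ``$|\mybar L^*|$ has order coprime to $p$,'' which is not literally meaningful when $\mybar L$ is infinite; the correct statement is that in characteristic $p$ the polynomial $X^p-1=(X-1)^p$ has only the root $1$, so every \emph{finite} subgroup of $\mybar L^*$ has order coprime to $p$.
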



\section{Proof of Theorem~\ref{global}}

Throughout this section, we assume $L/K$ and $M/K$ are field extensions with $L/K$ both finite and separable, and also
$\abs{\,\cdot\,}_L$ and $\abs{\,\cdot\,}_M$ are absolute values on $L$ and $M$, respectively, which extend the same 
absolute value $\abs{\,\cdot\,}_K$ on $K$.

Since $L/K$ is finite separable, there exists an element $s \in L$ such that $L = K(s)$. Let $f(X) \in K[X]$ 
be the irreducible polynomial of $s$ over $K$, which is separable since $L/K$ is separable. Then $L = K(s)$ 
is naturally isomorphic to the quotient ring $K[X]/(f)$ of the polynomial ring $K[X]$ by the ideal $(f)$. 
Write $f(X) = f_1(X) f_2(X) \cdots f_n(X)$ in $M[X]$, where $n\ge 1$ and $f_i(X)$ are distinct monic irreducible 
polynomials over $M$. Hence, the tensor product $L \otimes_K M$ is naturally isomorphic to $M[X]/(f)$, which by 
Chinese remainder theorem is naturally isomorphic to a product of fields $F_1 \times F_2 \times \cdots \times F_n$, 
where $F_i := M[X]/(f_i)$ for any $1\le i\le n$. 

Via the $i$-th projection from $F_1 \times F_2 \times \cdots \times F_n$, each $F_i$ comes equipped with 
embeddings 
$L \to F_i$ and $M \to F_i$ which yield a commutative diagram with the given embeddings $K \to L$ and $K \to M$. 
In other words, the two embeddings $K \to L \to F_i$ and  $K \to M \to F_i$ are identical for any $i$

By the universal property of $\widehat{K}$, there exist unqiue embeddings $\widehat{K} \to \widehat{L}$ and 
$\widehat{K} \to \widehat{M}$ which preserve both the absolute values and the $K$-algebra structures. By 
Corollary~\ref{gen} we know $\widehat{L} = L. \widehat{K}$, or equivalently, $\widehat{L} = \widehat{K}(s)$.

Let $g(X) \in \widehat{K}[X]$ be the irreducible polynomial of $s$ over $\widehat{K}$, so $g(X)$ is a monic 
irreducible separabe factor of $f(X)$ over $\widehat{K}$. Then $\widehat{L} = \widehat{K}(s)$ is naturally 
isomorphic to the quotient ring $\widehat{K}[X]/(g)$ of the polynomial ring $K[X]$ by the ideal $(g)$. Write 
$g(X) = g_1(X) g_2(X) \cdots g_m(X)$ in $\widehat{M}[X]$, where $m\ge 1$ and $g_j(X)$ are distinct monic 
irreducible polynomials over $\widehat{M}$. Hence, the tensor product 
$\widehat{L} \otimes_{\widehat{K}} \widehat{M}$ is naturally isomorphic to $\widehat{M}[X]/(g)$, 
which by Chinese remainder theorem is naturally isomorphic to a product of fields 
$G_1 \times G_2 \times \cdots \times G_m$, where $G_j := \widehat{M}[X]/(g_j)$ for any $1\le j\le m$. 

Via the $j$-th projection from $G_1 \times G_2 \times \cdots \times G_m$, each $G_j$ comes equipped with 
embeddings $\widehat{L} \to G_j$ and $\widehat{M} \to G_j$ which yield a commutative diagram with the 
given embeddings $\widehat{K} \to \widehat{L}$ and $\widehat{K} \to \widehat{M}$. By Proposition~\ref{cext} 
there exists a unique absolute value $\abs{\,\cdot\,}_j$ on $G_j$ which extends the absolute value 
$\abs{\,\cdot\,}_{\widehat{M}}$ on $\widehat{M}$. Moreover, $G_j$ is complete with respect to 
$\abs{\,\cdot\,}_j$, and the restriction $\abs{\,\cdot\,}_j$ to $\widehat{L}$ is equal to 
$\abs{\,\cdot\,}_{\widehat{L}}$. 

\begin{lemma}
There exists a unique map 
\[
\sigma \colon \{1,2,\dots,m\} \to \{1,2,\dots,n\}
\] 
such that each $g_j(X)$ is a monic irreducible factor of $f_{\sigma(j)}(X)$ over $\widehat{M}$. 
\end{lemma}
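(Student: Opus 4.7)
The plan is to use unique factorization in $\widehat{M}[X]$, combined with the fact that separability of $f$ forces its distinct $M$-irreducible factors to remain pairwise coprime over $\widehat{M}$.

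First I would verify that each $g_j$ divides $f$ in $\widehat{M}[X]$. Since $g(X)$ is the minimal polynomial of $s$ over $\widehat{K}$ and $f(s)=0$, we have $g \mid f$ in $\widehat{K}[X]$, hence a fortiori in $\widehat{M}[X]$. As $g = g_1 g_2 \cdots g_m$ in $\widehat{M}[X]$, each $g_j$ divides $f$ in $\widehat{M}[X]$.

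Next, because $f = f_1 f_2 \cdots f_n$ already holds in $M[X]$, this factorization persists in $\widehat{M}[X]$. Since each $g_j$ is monic and irreducible in $\widehat{M}[X]$ and divides the product $f_1 f_2 \cdots f_n$, by unique factorization in the PID $\widehat{M}[X]$ there exists at least one index $i$ with $g_j \mid f_i$. Defining $\sigma(j)$ to be this index yields a map $\sigma\colon\{1,\dots,m\}\to\{1,\dots,n\}$ with the required property, giving existence.

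For uniqueness of $\sigma(j)$, I would argue that the polynomials $f_1,\dots,f_n$ remain pairwise coprime in $\widehat{M}[X]$. Because $L/K$ is separable, the minimal polynomial $f$ is separable over $K$, hence over $M$, so its distinct monic irreducible factors $f_i$ over $M$ are pairwise coprime in $M[X]$. For $i \ne i'$ we therefore have a B\'ezout identity $a_{i,i'} f_i + b_{i,i'} f_{i'} = 1$ in $M[X]$, and this identity remains valid in $\widehat{M}[X]$. Consequently no irreducible element of $\widehat{M}[X]$ can divide two distinct $f_i$, so the index $\sigma(j)$ is uniquely determined, and hence so is the map $\sigma$.

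The only step requiring a moment's thought is the transfer of coprimality from $M[X]$ to $\widehat{M}[X]$; this is the ostensible obstacle, but it is routine once one invokes B\'ezout in $M[X]$, since B\'ezout identities are preserved under any ring extension. Everything else is bookkeeping with divisibility in the PID $\widehat{M}[X]$.
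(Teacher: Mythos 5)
Your proposal is correct and follows essentially the same route as the paper: each $g_j$ divides $g$, which divides $f = f_1\cdots f_n$, and then unique factorization in $\widehat M[X]$ together with pairwise coprimality of the $f_i$ over $\widehat M$ pins down a unique index $\sigma(j)$. The paper simply asserts uniqueness in one line; you spell it out via B\'ezout. One small simplification: you invoke separability of $f$ to get pairwise coprimality of the $f_i$ in $M[X]$, but this is automatic for any collection of distinct monic irreducibles over a field. Separability is already used earlier in the setup to ensure the $f_i$ are distinct (i.e., that $f$ is squarefree), so by this point the coprimality and its persistence under base change to $\widehat M[X]$ require no further appeal to separability.
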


\begin{proof}
On the one hand, each $g_j(X) \in \widehat{M}[X]$ 
is a monic irreducible factor of $g(X) \in \widehat{K}[X]$ which divides $f(X) \in K[X]$; on the other hand, 
we know that $f(X) = f_1(X) f_2(X) \cdots f_n(X)$ in $M[X]$, where $f_i(X)$ are distinct monic irreducible 
polynomials over $M$. Therefore, each $g_j(X)$ divides some unique $f_i(X)$ which concludes the proof.
\end{proof}

For any $1\le j\le m$, there exists a natural embedding $\iota_j \colon F_{\sigma(j)} \to G_j$ which preserves the 
structures of both $L$-alegbras and $M$-algebras. Hence, via this embedding $\iota_j \colon F_{\sigma(j)} \to G_j$, 
the absolute value $\abs{\,\cdot\,}_j$ on $G_j$ gives rise to an absolute value $\abs{\,\cdot\,}^j_{\sigma(j)}$ on 
$F_{\sigma(j)}$ which extends both $\abs{\,\cdot\,}_L$ and $\abs{\,\cdot\,}_M$ on $L$ and $M$ respectively. Moreover, 
it is clear that $F_{\sigma(j)}$ is dense in $G_j$ with respect to the topology induced by $\abs{\,\cdot\,}_j$. 
Therefore, we get 

\begin{lemma}\label{comp}
For any $1\le j\le m$, the triple $(G_j, \abs{\,\cdot\,}_j, \iota_j)$ is a completion of $F_{\sigma(j)}$ with 
respect to the restriction $\abs{\,\cdot\,}^j_{\sigma(j)}$ of $\abs{\,\cdot\,}_j$ on $F_{\sigma(j)}$.  
\end{lemma}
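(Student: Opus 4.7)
The plan is to verify the three defining conditions of a completion directly, since most of the necessary ingredients have already been assembled in the paragraphs preceding the lemma. The compatibility condition, namely that $\abs{\,\cdot\,}_j$ extends $\abs{\,\cdot\,}^j_{\sigma(j)}$ along $\iota_j$, holds by the very definition of $\abs{\,\cdot\,}^j_{\sigma(j)}$ as the pullback of $\abs{\,\cdot\,}_j$ under $\iota_j$. Completeness of $G_j$ with respect to $\abs{\,\cdot\,}_j$ follows from Proposition~\ref{cext}: $\widehat{M}$ is complete, $G_j/\widehat{M}$ is a finite extension, and $\abs{\,\cdot\,}_j$ is the unique absolute value on $G_j$ extending $\abs{\,\cdot\,}_{\widehat{M}}$, which by the cited proposition makes $G_j$ complete.

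The main content is density of $\iota_j(F_{\sigma(j)})$ in $G_j$, and here I would begin by making the map $\iota_j$ explicit. Set $t:=X\bmod g_j\in G_j$, so that $G_j=\widehat{M}[t]$ with $t$ a root of $g_j$ of degree $d:=\deg g_j$. Because $g_j\mid f_{\sigma(j)}$ in $\widehat{M}[X]$, the reduction map $M[X]\to\widehat{M}[X]/(g_j)$ factors through $F_{\sigma(j)}=M[X]/(f_{\sigma(j)})$, and chasing through the Chinese-remainder decompositions of $L\otimes_K M$ and $\widehat{L}\otimes_{\widehat K}\widehat{M}$ shows that the resulting map agrees with $\iota_j$. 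Consequently $\iota_j(F_{\sigma(j)})=M[t]$ sits inside $G_j=\widehat{M}[t]$.

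Given any $y=\sum_{k=0}^{d-1}a_k t^k\in G_j$ with $a_k\in\widehat{M}$ and any $\varepsilon>0$, density of $M$ in $\widehat{M}$ lets me choose $b_k\in M$ with $\abs{a_k-b_k}_{\widehat M}$ arbitrarily small; then
\[
\Bigl\lvert y-\sum_{k=0}^{d-1} b_k t^k\Bigr\rvert_j \le \sum_{k=0}^{d-1}\abs{a_k-b_k}_{\widehat M}\cdot\abs{t}_j^k
\]
by the triangle inequality, which can be made smaller than $\varepsilon$ since $\abs{t}_j$ is a fixed real number independent of the approximation. This produces an element of $M[t]=\iota_j(F_{\sigma(j)})$ within distance $\varepsilon$ of $y$, establishing density. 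There is no serious obstacle in this argument; the only point requiring care is the bookkeeping that identifies $\iota_j(F_{\sigma(j)})$ with $M[t]$ under the chosen models for $F_{\sigma(j)}$ and $G_j$, and once this is in hand the density reduces to the familiar fact that coefficient-wise approximation in a dense subfield yields approximation of polynomial values.
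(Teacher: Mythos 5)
Your proof is correct and follows essentially the same route as the paper: the paper also verifies the three defining properties of a completion, obtaining completeness of $G_j$ from Proposition~\ref{cext} and then simply asserting that density of $\iota_j(F_{\sigma(j)})$ in $G_j$ ``is clear.'' Your explicit coefficient-wise approximation of $y=\sum_{k<d} a_k t^k$ by $\sum_{k<d} b_k t^k$ with $b_k\in M$ is exactly the standard argument the paper leaves implicit, so the only difference is that you supply the details.
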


Moreover, we have the following

\begin{lemma}\label{bijection}
For any $1\le i\le n$, the map $j \mapsto \abs{\,\cdot\,}^j_{\sigma(j)}$ gives rise to a bijection $\Psi_i$ 
from the fiber $\sigma^{-1}(i)$ to the set $\mathcal{V}_i$ of all absolute values on $F_i$ which extend both 
$\abs{\,\cdot\,}_L$ and $\abs{\,\cdot\,}_M$ on $L$ and $M$ respectively.
\end{lemma}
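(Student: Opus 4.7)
The strategy is to establish injectivity and surjectivity of $\Psi_i$ separately, in both cases leveraging Lemma~\ref{comp}, which identifies $G_j$ as a completion of $F_{\sigma(j)}$ with respect to $\abs{\,\cdot\,}^j_{\sigma(j)}$ and thereby exhibits $g_j$ as the minimal polynomial of the image $s' := \iota_j(s)$ over $\widehat{M}$ inside $G_j$.

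For injectivity, I will suppose that $j_1, j_2 \in \sigma^{-1}(i)$ produce the same absolute value $\nu$ on $F_i$. Then both $G_{j_1}$ and $G_{j_2}$ are completions of $(F_i,\nu)$, so the universal property of a completion furnishes a unique $F_i$-linear isomorphism $\theta\colon G_{j_1} \to G_{j_2}$ preserving absolute values. Since the topological closure of $M \subseteq F_i$ inside each $G_{j_k}$ is canonically a copy of $\widehat{M}$, the map $\theta$ is automatically $\widehat{M}$-linear and fixes $s'$; hence the minimal polynomials $g_{j_1}$ and $g_{j_2}$ of $s'$ over $\widehat{M}$ must coincide, forcing $j_1 = j_2$ by distinctness of the $g_j$.

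For surjectivity, given $\nu \in \mathcal{V}_i$, I will form the completion $\widehat{F_{i,\nu}}$ of $F_i$ with respect to $\nu$. The universal property applied to $L\hookrightarrow F_i$ and $M\hookrightarrow F_i$ embeds $\widehat{L}$ and $\widehat{M}$ canonically into $\widehat{F_{i,\nu}}$, and Corollary~\ref{gen} then gives $\widehat{F_{i,\nu}} = F_i \cdot \widehat{M} = \widehat{M}(s')$. Let $h$ be the minimal polynomial of $s'$ over $\widehat{M}$. Since $f_i(s')=0$ in $F_i$, $h$ divides $f_i$ in $\widehat{M}[X]$; since $g(s)=0$ in $\widehat{L}$ and the canonical embedding $\widehat{L} \hookrightarrow \widehat{F_{i,\nu}}$ sends $s$ to $s'$, $h$ also divides $g$. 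Hence $h$ equals some $g_j$, and the divisibility $g_j \mid f_i$ forces $\sigma(j)=i$. The resulting $\widehat{M}$-isomorphism $G_j \to \widehat{F_{i,\nu}}$ carries $\abs{\,\cdot\,}_j$ onto the absolute value of $\widehat{F_{i,\nu}}$ by the uniqueness part of Proposition~\ref{cext}; restricting to $F_i$ yields $\Psi_i(j)=\nu$.

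The main delicate point throughout will be the bookkeeping of embeddings: one must verify that the canonical embedding $\widehat{L}\hookrightarrow\widehat{F_{i,\nu}}$ really sends the element $s\in\widehat{L}$ to the designated generator $s'\in F_i$, and that the $\widehat{M}$-structures inherited from the different completions genuinely agree with the ones built into the $G_j$. Once these identifications are pinned down, both halves of the bijection reduce cleanly to the uniqueness of a completion and the uniqueness of the extension of a complete absolute value to a finite extension.
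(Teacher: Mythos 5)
Your proof is correct and follows essentially the same route as the paper: injectivity by exhibiting $G_{j_1}$ and $G_{j_2}$ as completions of $(F_i,\nu)$ and invoking uniqueness of completion together with the distinctness of the $g_j$, and surjectivity by showing the minimal polynomial of $s'$ over $\widehat{M}$ in $\widehat{F_{i,\nu}}$ divides both $f_i$ and $g$, hence equals some $g_j$ with $\sigma(j)=i$. The ``delicate point'' you flag — that the canonical embedding $\widehat{L}\hookrightarrow\widehat{F_{i,\nu}}$ carries $s$ to $s'$ — is exactly the implicit verification the paper relies on as well, and it follows immediately from the $L$-linearity of that embedding.
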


\begin{proof}
First, suppose two indices $j$ and $j'$ in the fiber $\sigma^{-1}(i)$ induce the same absolute value 
$\abs{\,\cdot\,}^j_i = \abs{\,\cdot\,}^{j'}_i$ on the field $F_i$. Then by Lemma~\ref{comp} both 
$(G_j, \abs{\,\cdot\,}_j, \iota_j)$ and $(G_{j'}, \abs{\,\cdot\,}_{j'}, \iota_{j'})$ are completions of $F_i$ 
with respect to the same absolute value $\abs{\,\cdot\,}^j_i = \abs{\,\cdot\,}^{j'}_i$. Hence, there exists 
a unique isomorphism $\theta \colon G_j \to G_{j'}$ which preserves both the absolute values and the 
$F_i$-algebra structures. In particular, $\theta \colon G_j \to G_{j'}$ is an $M$-algebra isomorphism, 
so $\theta \colon G_j \to G_{j'}$ is an $\widehat{M}$-algebra isomorphism by the universal property of 
$\widehat{M}$. Note that $\theta$ sends $\mybar{X}$ in $G_j := \widehat{M}[X]/(g_j)$ to $\mybar{X}$ in 
$G_{j'} := \widehat{M}[X]/(g_{j'})$, hence we get $g_j(X) = g_{j'}(X)$, or equivalently $j=j'$, since $g_j(X)$ 
and $g_{j'}(X)$ are the irreducible polynomials over $\widehat{M}$ of $\mybar{X}$ in $G_j$ and $\mybar{X}$ 
in $G_{j'}$ respectively. 

Conversely, suppose $\abs{\,\cdot\,}$ is an absolute values on $F_i$ extending both $\abs{\,\cdot\,}_L$ 
and $\abs{\,\cdot\,}_M$ on $L$ and $M$ respectively. Denote by $\widehat{F_i}$ the completion of $F_i$ with 
respect to $\abs{\,\cdot\,}$. By the universal property of $\widehat{L}$, there exists a unique embedding 
$\widehat{L} \to \widehat{F_i}$ which preserves both the absolute values and $L$-algebra structures. 
Similarly, there exists a unique embedding $\widehat{M} \to \widehat {F_i}$ preserving both the absolute 
values and $M$-algebra structures. By Corollary~\ref{gen} we know $\widehat{F_i} = \widehat{M}(y)$ where 
$y$ is the image of $\mybar{X} \in F_i := M[X]/(f_i)$ in $\widehat{F_i}$. Moreover, the irreducible 
polynomial of $y$ over $\widehat{M}$ must be $g_j(X)$ for some $j\in \sigma^{-1}(i)$, since it divides 
both the irreducible polynomial $f_i(X)$ of $\mybar{X} \in F_i$ over $M$ and the irreducible polynomial 
$g(X)$ of $s$ over $\widehat{K}$. Hence, there exists a natural $\widehat{M}$-algebra isomorphism 
$\widehat{F_i} = \widehat{M}(y) \to G_j := \widehat{M}[X]/(g_j)$, which preserves the absolute values 
by Proposition~\ref{cext}. Therefore, the given absolute value $\abs{\,\cdot\,}$ on $F_i$ is equal to 
the absolute value $\Psi_i(j) = \abs{\,\cdot\,}^j_i$ for some $j\in \sigma^{-1}(i)$, which concludes 
the proof. 
\end{proof}

Now we are ready to prove Theorem~\ref{global}.

\begin{proof}[Proof of Theorem~\ref{global}]
One the one hand, by Lemma~\ref{bijection} we can construct a bijection 
$\Psi \colon \{1, 2, \dots, m\} \to \coprod_{i=1}^n \mathcal{V}_i$ whose restriction on each fiber $\sigma^{-1}(i)$ 
coincides with the bijection $\Psi_i \colon \sigma^{-1}(i) \to \mathcal{V}_i$. On the other hand, there exists 
a bijection from the set of all roots of $g(X)$ in $\widehat{M}^a$ to the set of all $\widehat{K}$-algebra 
embeddings $\widehat{L} \to \widehat{M}^a$, which induces a bijection from the set $\{1, 2, \dots, m\}$ to 
the set of equivalence classes of all $\widehat{K}$-algebra embeddings $\widehat{L} \to \widehat{M}^a$ up to 
composition with $\Aut_{\widehat M}(\widehat{M}^a)$. Therefore, there is a bijection between the disjoint union 
$\coprod_{i=1}^n \mathcal{V}_i$ and the set of equivalence classes of all $\widehat{K}$-algebra embeddings 
$\widehat{L} \to \widehat{M}^a$ up to composition with $\Aut_{\widehat M}(\widehat{M}^a)$, which concludes the 
proof of the part~\eqref{count}. 

Furthermore, by Lemma~\ref{bijection} and Lemma~\ref{comp} we have 
\begin{align*}
\sum_{i=1}^n \sum_{\nu\in\mathcal{V}_i} [\widehat{F_i,\nu} : \widehat{M}] &= \sum_{i=1}^n \sum_{j\in \sigma^{-1}(i)} 
[G_j : \widehat{M}] = \sum_{i=1}^n \sum_{j\in \sigma^{-1}(i)} \deg(g_j) \\ &= \deg(g) = [\widehat{L} : \widehat{K}],
\end{align*}
which concludes the proof of the part~\eqref{degree}. 
\end{proof}

\begin{rmk}
Theorem~\ref{global} is independent of the choice of the generator $s$ over $K$ of the finite separable extension 
$L/K$. Indeed, the way to express the tensor product $L \otimes_K M$ as $F_1 \times F_2 \times \cdots \times F_n$ 
is unique up to isomorphism except the order of $F_i$, where each $F_i$ is a field containing both $L$ and $M$.
Moreover, up to this isomorphsim, the bijection is canonical between the disjoint union $\coprod_{i=1}^n \mathcal{V}_i$ 
and the set of equivalence classes of all $\widehat{K}$-algebra embeddings $\widehat{L} \to \widehat{M}^a$ up to 
composition with $\Aut_{\widehat M}(\widehat{M}^a)$.
\end{rmk}


\section{Proof of Theorem~\ref{local}}

Throughout this section, we assume $F/K$ is a field extension, $L$ and $M$ are subextensions of $F$ over $K$ 
such that both $L/K$ and $F/M$ are finite, $\abs{\,\cdot\,}_L$ and $\abs{\,\cdot\,}_M$ are absolute values on $L$ 
and $M$ respectively which extend the same absolute value $\abs{\,\cdot\,}_K$ on $K$. 

Since $L/K$ is a finite extension, we can write $L = K(S)$ where $S\subseteq L$ is a finite set which is 
algebraic over $K$. Similarly, we can write $F = M(T)$ where $T\subseteq F$ is a finite set which is algebraic 
over $M$. By the universal property of $\widehat{K}$ there exist unique embeddings $\widehat{K} \to \widehat{L}$ 
and $\widehat{K} \to \widehat{M}$ which preserve both the absolute values and the $K$-algebra structures. By 
Corollary~\ref{gen} we know $\widehat{L} = \widehat{K}(S)$, so $\widehat{L} / \widehat{K}$ is a finite extension. 

Let $\mathcal{E}$ be the set of all $M$-algebra embeddings $\alpha\colon F \to \widehat{M}^a$ for which there 
exists a $\widehat K$-algebra embedding $\beta\colon\widehat L\to\widehat{M}^a$ such that $\alpha$ and $\beta$ 
have the same restriction to $L$. Our goal is to show that there exists a canonical bijection between the set 
$\mathcal{V}$ of all absolute values on $F$ extending both $\abs{\,\cdot\,}_L$ and $\abs{\,\cdot\,}_M$ and the 
set of equivalence classes of $\mathcal E$ up to composition with $\Aut_{\widehat M}(\widehat{M}^a)$.

\begin{lemma}\label{map}
There exists a canonical map $\Phi \colon \mathcal{E} \to \mathcal{V}$ which sends any embedding 
$\alpha \colon F \to \widehat{M}^a$ in $\mathcal{E}$ to the restriction of $\abs{\,\cdot\,}_{\widehat{M}^a}$ on 
$F$ along $\alpha$.
\end{lemma}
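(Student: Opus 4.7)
The plan is to verify the three-step recipe: pull back the canonical absolute value on $\widehat{M}^a$ along $\alpha$, check that the result is a well-defined absolute value on $F$, and then check separately that it extends $\abs{\,\cdot\,}_M$ (easy, from $\alpha$ being an $M$-algebra map) and $\abs{\,\cdot\,}_L$ (the content, using the auxiliary $\beta$ in the definition of $\mathcal{E}$).

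First I would invoke Proposition~\ref{cext} for the algebraic extension $\widehat{M}^a / \widehat{M}$: since $\widehat{M}$ is complete with respect to $\abs{\,\cdot\,}_{\widehat{M}}$, there is a unique absolute value $\abs{\,\cdot\,}_{\widehat{M}^a}$ on $\widehat{M}^a$ extending $\abs{\,\cdot\,}_{\widehat{M}}$. For $\alpha \in \mathcal{E}$ I then define $\Phi(\alpha) \colon F \to \mathbb{R}_{\ge 0}$ by $x\mapsto \abs{\alpha(x)}_{\widehat{M}^a}$. Because $\alpha$ is an injective ring homomorphism, the three axioms of an absolute value are transported from $\widehat{M}^a$ to $F$ without issue, so $\Phi(\alpha)$ is an absolute value on $F$.

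Next I would verify that $\Phi(\alpha)$ extends $\abs{\,\cdot\,}_M$: for $x\in M$, the $M$-algebra structure makes $\alpha(x)$ coincide with the image of $x$ under the composite $M\hookrightarrow \widehat M\hookrightarrow \widehat{M}^a$, and since $\abs{\,\cdot\,}_{\widehat{M}^a}$ restricts to $\abs{\,\cdot\,}_{\widehat{M}}$ on $\widehat{M}$, which in turn restricts to $\abs{\,\cdot\,}_M$ on $M$ by the definition of a completion, we obtain $\abs{\alpha(x)}_{\widehat{M}^a}=\abs{x}_M$. The extension to $L$ is the step where the defining property of $\mathcal{E}$ is used: picking $\beta\colon\widehat{L}\to\widehat{M}^a$ as in the hypothesis, the pullback $\abs{\,\cdot\,}'$ on $\widehat{L}$ defined by $\abs{x}':=\abs{\beta(x)}_{\widehat{M}^a}$ is an absolute value on $\widehat{L}$ which extends $\abs{\,\cdot\,}_{\widehat K}$ (because $\beta$ is a $\widehat K$-algebra embedding). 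Since $\widehat{L}/\widehat{K}$ is algebraic and $\widehat{K}$ is complete, the uniqueness clause of Proposition~\ref{cext} forces $\abs{\,\cdot\,}'=\abs{\,\cdot\,}_{\widehat{L}}$. Hence for $x\in L$ we have $\abs{\alpha(x)}_{\widehat{M}^a}=\abs{\beta(x)}_{\widehat{M}^a}=\abs{x}_{\widehat{L}}=\abs{x}_L$, using at the last equality that $\abs{\,\cdot\,}_{\widehat{L}}$ restricts to $\abs{\,\cdot\,}_L$ on $L$.

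Canonicity of $\Phi$ is automatic once the above steps are established, since nothing in the recipe depended on a choice. The main potential obstacle is the verification that $\Phi(\alpha)$ extends $\abs{\,\cdot\,}_L$; concretely, one must check that any such $\beta$ is forced (via Proposition~\ref{cext}) to be isometric, so that the value $\abs{\alpha(x)}_{\widehat{M}^a}$ does not depend on the particular $\beta$ chosen and automatically agrees with $\abs{x}_L$ on $L$. Once this uniqueness-of-extension argument is in place, all remaining assertions of the lemma are immediate.
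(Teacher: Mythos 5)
Your proof is correct and takes essentially the same route as the paper: both pull back $\abs{\,\cdot\,}_{\widehat{M}^a}$ along $\alpha$, observe that agreement with $\abs{\,\cdot\,}_M$ is immediate from $\alpha$ being an $M$-algebra map, and invoke the uniqueness clause of Proposition~\ref{cext} applied to the pullback along $\beta$ to force agreement with $\abs{\,\cdot\,}_{\widehat{L}}$ (and hence with $\abs{\,\cdot\,}_L$) on $L$. Your version spells out the verification in a bit more detail, but the key step is identical.
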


\begin{proof}
For any embedding $\alpha\colon F \to \widehat{M}^a$ in $\mathcal{E}$, by definition there exists some 
$\widehat{K}$-algebra embedding $\beta \colon \widehat{L} \to \widehat{M}^a$ such that $L \to F \to \widehat{M}^a$ 
is equal to $L \to \widehat{L} \to \widehat{M}^a$. By Proposition~\ref{cext} the restriction of 
$\abs{\,\cdot\,}_{\widehat{M}^a}$ on $\widehat{L}$ along $\beta$ and the absolute value $\abs{\,\cdot\,}_{\widehat{L}}$ 
are equal since both of them extend $\abs{\,\cdot\,}_{\widehat{K}}$ and $\widehat{L} / \widehat{K}$ is finite. Hence, 
the restriction of $\abs{\,\cdot\,}_{\widehat{M}^a}$ on $F$ along $\alpha$ is an absolute value on $F$ which extends 
both extending both $\abs{\,\cdot\,}_L$ and $\abs{\,\cdot\,}_M$ on $L$ and $M$ respectively. Therefore, there is 
a canonical map $\Phi \colon \mathcal{E} \to \mathcal{V}$ which sends any embedding $\alpha \colon F \to \widehat{M}^a$ 
in $\mathcal{E}$ to the restriction of $\abs{\,\cdot\,}_{\widehat{M}^a}$ on $F$ along $\alpha$.
\end{proof}

\begin{lemma}\label{surjective}
The map $\Phi \colon \mathcal{E} \to \mathcal{V}$ is surjective.
\end{lemma}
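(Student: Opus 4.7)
The plan is to start with an arbitrary absolute value $\abs{\,\cdot\,}\in\mathcal V$ on $F$, pass to the completion $\widehat F$ of $F$ with respect to $\abs{\,\cdot\,}$, and then realize $\widehat F$ as a subfield of $\widehat{M}^a$ in an $\widehat M$-compatible way. Restricting this embedding to $F$ will give the desired $\alpha\in\mathcal E$, and the required $\beta$ will come for free from the same embedding restricted to the copy of $\widehat L$ inside $\widehat F$.

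First I would record the two universal-property embeddings into $\widehat F$. Since $\abs{\,\cdot\,}$ extends $\abs{\,\cdot\,}_L$ and $\abs{\,\cdot\,}_M$, the universal property of completions produces unique absolute-value-preserving embeddings $\widehat L\to\widehat F$ and $\widehat M\to\widehat F$ whose restrictions to $L$ and $M$ are the given inclusions $L\hookrightarrow F\hookrightarrow\widehat F$ and $M\hookrightarrow F\hookrightarrow\widehat F$ respectively. Next, because $F/M$ is finite, Corollary~\ref{gen} gives $\widehat F=F.\widehat M$, so $\widehat F/\widehat M$ is finite. Hence $\widehat F$ is an algebraic extension of $\widehat M$ and admits some $\widehat M$-algebra embedding $\gamma\colon\widehat F\to\widehat{M}^a$.

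Now set $\alpha:=\gamma|_F$ and $\beta:=\gamma$ composed with the embedding $\widehat L\to\widehat F$. Then $\alpha$ is an $M$-algebra embedding $F\to\widehat{M}^a$, and $\beta$ is a $\widehat K$-algebra embedding $\widehat L\to\widehat{M}^a$. The two restrictions $\alpha|_L$ and $\beta|_L$ both equal $\gamma$ composed with the inclusion $L\hookrightarrow\widehat F$, because the embedding $\widehat L\to\widehat F$ restricts on $L$ to the inclusion $L\hookrightarrow F\hookrightarrow\widehat F$ by construction; this puts $\alpha$ in $\mathcal E$.

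It then remains to verify $\Phi(\alpha)=\abs{\,\cdot\,}$. The pullback of $\abs{\,\cdot\,}_{\widehat{M}^a}$ along $\gamma$ is an absolute value on $\widehat F$ extending $\abs{\,\cdot\,}_{\widehat M}$, and by Proposition~\ref{cext} such an extension is unique, so it coincides with the absolute value on $\widehat F$ coming from the completion. Restricting to $F$ recovers $\abs{\,\cdot\,}$, which gives $\Phi(\alpha)=\abs{\,\cdot\,}$ and finishes the argument. The main subtlety, and the step I would be most careful about, is keeping straight the three copies of $L$ (namely inside $F$, inside $\widehat L$, and inside $\widehat{M}^a$) and checking that the universal property really does force $\alpha|_L=\beta|_L$; everything else is essentially a consequence of Corollary~\ref{gen} and Proposition~\ref{cext}.
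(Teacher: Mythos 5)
Your argument is correct and matches the paper's proof essentially step for step: pass to $\widehat F$, use the universal property to embed $\widehat L$ and $\widehat M$ into $\widehat F$, use Corollary~\ref{gen} to see $\widehat F/\widehat M$ is finite, choose an $\widehat M$-embedding $\widehat F\to\widehat{M}^a$, and read off $\alpha$ and $\beta$. The only cosmetic difference is that you make explicit the use of Proposition~\ref{cext} uniqueness to see that $\gamma$ preserves absolute values, which the paper leaves implicit.
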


\begin{proof}
Suppose $\abs{\,\cdot\,}_F$ is an absolute value in $\mathcal{V}$, in other words, $\abs{\,\cdot\,}_F$ is an absolute 
value on $F$ which extends both $\abs{\,\cdot\,}_L$ and $\abs{\,\cdot\,}_M$. Denote by $\widehat{F}$ the completion of 
$F$ with respect to $\abs{\,\cdot\,}_F$. By the universal property of $\widehat{L}$, there exists a unique $L$-algebra 
embedding $\widehat{L} \to \widehat{F}$ preserving the absolute values. Similarly, there exists a unique $M$-algebra 
embedding $\widehat{M} \to \widehat{F}$ preserving the absolute values. Since $F = M(T)$ is a finite extension over 
$M$, by Corollary~\ref{gen} we have $\widehat{F} = \widehat{M}(T)$, so $\widehat{F}$ is a finite field extension 
over $\widehat{M}$. Hence, there exists an $\widehat{M}$-algebra embedding $\widehat{F} \to \widehat{M}^a$ along which 
the absolute values are preserved. Denote by $\alpha$ the composition $F \to \widehat{F} \to \widehat{M}^a$ and denote 
by $\beta$ the composition $\widehat{L} \to \widehat{F} \to \widehat{M}^a$. Then $\alpha\colon F \to \widehat{M}^a$ 
is an $M$-algebra embedding, $\beta \colon \widehat{L} \to \widehat{M}^a$ is a $\widehat{K}$-algebra embedding, and 
$L \to F \to \widehat{M}^a$ equals $L \to \widehat{L} \to \widehat{M}^a$. Therefore, the embedding 
$\alpha\colon F \to \widehat{M}^a$ lies in $\mathcal{E}$ and its image $\Phi(\alpha)$ is the given absolute value 
$\abs{\,\cdot\,}_F$ in $\mathcal{V}$.
\end{proof}

\begin{lemma}\label{fiber}
Two embeddings $\alpha_1$ and $\alpha_2$ in $\mathcal{E}$ have the same image under the map 
$\Phi \colon \mathcal{E} \to \mathcal{V}$ if and only if $\alpha_2 = \tau \circ \alpha_1$ for 
some $\tau \in \Aut_{\widehat M}(\widehat{M}^a)$.
\end{lemma}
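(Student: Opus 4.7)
The plan is to reduce both directions to properties of absolute values on algebraic extensions of complete fields, and then to invoke the classical isomorphism-extension property of algebraic closures. Throughout, note that by Corollary~\ref{gen} applied to the finite extension $F/M$, the completion $\widehat{F}$ of $F$ with respect to any absolute value in $\mathcal{V}$ satisfies $\widehat{F} = \widehat{M}(T)$, so $\widehat{F}/\widehat{M}$ is a finite extension.

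For the easy ("if") direction, suppose $\alpha_2 = \tau \circ \alpha_1$ with $\tau \in \Aut_{\widehat{M}}(\widehat{M}^a)$. I would first show that every such $\tau$ preserves $\abs{\,\cdot\,}_{\widehat{M}^a}$. Indeed, for any $x \in \widehat{M}^a$ there is a finite extension $\widehat{M} \le N \le \widehat{M}^a$ containing $x$, and $\tau$ restricts to an $\widehat{M}$-algebra isomorphism $N \to \tau(N)$. Since both $\abs{\,\cdot\,}_{\widehat{M}^a}|_N$ and $y \mapsto \abs{\tau(y)}_{\widehat{M}^a}$ are absolute values on $N$ extending $\abs{\,\cdot\,}_{\widehat{M}}$, the uniqueness in Proposition~\ref{cext} forces them to coincide, giving $\abs{\tau(x)}_{\widehat{M}^a} = \abs{x}_{\widehat{M}^a}$. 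Pulling back to $F$ via $\alpha_1$ yields $\Phi(\alpha_2) = \Phi(\alpha_1)$.

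For the ("only if") direction, suppose $\Phi(\alpha_1) = \Phi(\alpha_2) =: \abs{\,\cdot\,}_F$, and let $\widehat{F}$ be the completion of $F$. Since each $\alpha_i \colon F \to \widehat{M}^a$ preserves absolute values by definition of $\Phi$, and its image lies in the finite (hence complete, by Proposition~\ref{cext}) extension $\widehat{M}(\alpha_i(T)) \subseteq \widehat{M}^a$, the universal property of the completion produces a unique $\widehat{M}$-algebra embedding $\widehat{\alpha}_i \colon \widehat{F} \to \widehat{M}^a$ extending $\alpha_i$. Now $\widehat{\alpha}_1(\widehat{F})$ and $\widehat{\alpha}_2(\widehat{F})$ are finite subextensions of $\widehat{M}$ inside the algebraic closure $\widehat{M}^a$, and $\widehat{\alpha}_2 \circ \widehat{\alpha}_1^{-1}$ is an $\widehat{M}$-algebra isomorphism between them. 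By the standard isomorphism-extension property of algebraic closures, this isomorphism extends to some $\tau \in \Aut_{\widehat{M}}(\widehat{M}^a)$, so $\widehat{\alpha}_2 = \tau \circ \widehat{\alpha}_1$, and restricting to $F$ gives $\alpha_2 = \tau \circ \alpha_1$.

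The only subtle point, and the main obstacle to a clean argument, is that $\widehat{M}^a$ is generally not complete, so the universal property of $\widehat{F}$ does not apply to $\alpha_i \colon F \to \widehat{M}^a$ directly. The resolution, as above, is to factor $\alpha_i$ through the complete intermediate field $\widehat{M}(\alpha_i(T))$, which is a finite extension of $\widehat{M}$ by Corollary~\ref{gen} and therefore complete by Proposition~\ref{cext}. Once this factorization is in hand, the extension of $\alpha_i$ to $\widehat{\alpha}_i$ and the subsequent isomorphism-extension step are routine.
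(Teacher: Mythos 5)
Your proof is correct and follows essentially the same route as the paper's: both directions hinge on the uniqueness of extensions of absolute values over a complete base field (Proposition~\ref{cext}) and on the completeness of $\widehat{M}(\alpha_i(T))$, and the ``only if'' direction in both cases produces the required $\tau$ by identifying $\widehat{M}(\alpha_1(T))$ and $\widehat{M}(\alpha_2(T))$ as two realizations of the completion of $F$ and then extending the resulting $\widehat{M}$-algebra isomorphism to $\widehat{M}^a$. The one place you are slightly terse is in asserting that $\widehat{\alpha}_i$ is an $\widehat{M}$-algebra embedding: the universal property of $\widehat{F}$ gives an $F$-algebra (hence $M$-algebra) embedding, and one must then invoke the universal property of $\widehat{M}$ to see that its restriction to $\widehat{M}$ is the canonical inclusion, exactly as the paper does.
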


\begin{proof}
Let $\alpha_1$ and $\alpha_2$ be any two embeddings in $\mathcal{E}$. First, suppose $\alpha_2 = \tau \circ \alpha_1$ 
for some $\widehat{M}$-algebra isomorphism $\tau \colon \widehat{M}^a \to \widehat{M}^a$. By Proposition~\ref{cext} 
the $\widehat{M}$-algebra isomorphism $\tau \colon \widehat{M}^a \to \widehat{M}^a$ preserves the absolute values, 
so the identity $\alpha_2 = \tau \circ \alpha_1$ implies that $\alpha_1$ and $\alpha_2$ give the same absolute 
value $\abs{\,\cdot\,}_F$ on $F$. Therefore, $\alpha_1$ and $\alpha_2$ have the same image under the map 
$\Phi \colon \mathcal{E} \to \mathcal{V}$. Conversely, suppose $\alpha_1$ and $\alpha_2$ have the same image 
$\abs{\,\cdot\,}_F$ under the map $\Phi \colon \mathcal{E} \to \mathcal{V}$. For any $i\in \{1,2\}$, 
$\widehat{M}(\alpha_i(T))$ is finite over $\widehat{M}$ since $F = M(T)$ is finite over $M$, so by 
Proposition~\ref{cext} $\widehat{M}(\alpha_i(T))$ is complete with respect to the restricted absolute value 
$\abs{\,\cdot\,}_i$ of $\abs{\,\cdot\,}_{\widehat{M}^a}$, hence $(\widehat{M}(\alpha_i(T)), \abs{\,\cdot\,}_i, \alpha_i)$ 
is a completion of $F$ with respect to $\abs{\,\cdot\,}_F$. By the universal properties of completions, there exists 
a unique isomorphism $\tau \colon \widehat{M}(\alpha_1(T)) \to \widehat{M}(\alpha_2(T))$ which preserves both the 
absolute values and the $F$-algebra structures. In particular, 
$\tau \colon \widehat{M}(\alpha_1(T)) \to \widehat{M}(\alpha_2(T))$ preserves the $M$-algebra structures, which 
implies it preserves the $\widehat{M}$-algebra structures by the universal property of $\widehat{M}$. Hence, 
$\tau \colon \widehat{M}(\alpha_1(T)) \to \widehat{M}(\alpha_2(T))$ is an $\widehat{M}$-algebra isomorphism. Thus 
we can extend $\tau$ to an $\widehat{M}$-algebra automorphism of $\widehat{M}^a$, which we denote also by $\tau$ 
by abuse of language. Therefore, $\alpha_2 = \tau \circ \alpha_1$ for some $\widehat{M}$-algebra isomorphism 
$\tau \colon \widehat{M}^a \to \widehat{M}^a$.
\end{proof}

Now, we are ready to prove Theorem~\ref{local}.

\begin{proof}[Proof of Theorem~\ref{local}]
By the combination of Lemma~\ref{map}, Lemma~\ref{surjective}, and Lemma~\ref{fiber}, we know that the map 
$\Phi \colon \mathcal{E} \to \mathcal{V}$ induces a canonical bijection from the set of equivalence classes of 
$\mathcal E$ up to composition with $\Aut_{\widehat M}(\widehat{M}^a)$ to the set $\mathcal{V}$ of all absolute 
values on $F$ which extends both $\abs{\,\cdot\,}_L$ and $\abs{\,\cdot\,}_M$.
\end{proof}


\section{Proof of Proposition~\ref{Abhyankar}}

In this section, we use Proposition~\ref{ramgp} to prove Proposition~\ref{Abhyankar}.

\begin{proof}[Proof of Proposition~\ref{Abhyankar}]
Since $N/K$ is finite Galois and $\mybar{N}/\mybar{K}$ is separable, we know that $N$ is finite Galois over any 
of $F$, $L$, $M$, and $\mybar{N}$ is separable over any of $\mybar{F}$, $\mybar{L}$, $\mybar{M}$. So we have 
$e(N/F) = \abs{G_0(N/F)}$, $e(N/L) = \abs{G_0(N/L)}$, and $e(N/M) = \abs{G_0(N/M)}$. Therefore, the conclusion 
\[
e(N/F) = \gcd(e(N/L), e(N/M))
\]
is equivalent to the statement 
\[
\abs{G_0(N/F)} = \gcd(\abs{G_0(N/L)}, \abs{G_0(N/M)}).
\]
By the assumption $F = L.M$ we know $G(N/F) = G(N/L) \cap G(N/M)$, thus $G_i(N/F) = G_i(N/L) \cap G_i(N/M)$ 
holds for any $i$. 

If $\charp(\mybar{K}) = 0$, then $G_0(N/K)$ is a finite cyclic group, so we have
\[
\abs{G_0(N/F)} = \abs{G_0(N/L) \cap G_0(N/M)} = \gcd(\abs{G_0(N/L)}, \abs{G_0(N/M)}),
\]
which concludes the proof.

If $\charp(\mybar{K}) = p > 0$, then $G_1(N/K)$ is the unique Sylow $p$-subgroup of $G_0(N/K)$ 
and the quotient $G_0(N/K) / G_1(N/K)$ is a finite cyclic group of order coprime to $p$. Let 
$\Phi \colon G_0(N/K) \to G_0(N/K) / G_1(N/K)$ be the canonical surjective homomorphism of groups.
Therefore, we have
\[
\abs{G_0(N/K)} = \abs{G_1(N/K)} \times \abs{G_0(N/K) / G_1(N/K)}
\]
in which 
$\abs{G_1(N/K)}$ and $\abs{G_0(N/K) / G_1(N/K)}$ are coprime. By restricting $\Phi$ to $G_0(N/F)$,
we get a surjective homomorphism $G_0(N/F) \to \Phi(G_0(N/F))$ with the kernel $G_1(N/F)$. 
Hence, we have 
\[
\abs{G_0(N/F)} = \abs{G_1(N/F)} \times \abs{\Phi(G_0(N/F))}.
\]
Similarly, we have 
\[
\abs{G_0(N/L)} = \abs{G_1(N/L)} \times \abs{\Phi(G_0(N/L))}
\]
and 
\[
\abs{G_0(N/M)} = \abs{G_1(N/M)} \times \abs{\Phi(G_0(N/M))}.
\]
Therefore, in order to show the desired statement 
\[
\abs{G_0(N/F)} = \gcd(\abs{G_0(N/L)}, \abs{G_0(N/M)})
\]
it is equvalent to show both 
\[
\abs{G_1(N/F)} = \gcd(\abs{G_1(N/L)}, \abs{G_1(N/M)})
\]
and 
\[
\abs{\Phi(G_0(N/F))} = \gcd(\abs{\Phi(G_0(N/L))}, \abs{\Phi(G_0(N/M))}).
\]
Since $L/K$ is tamely ramified, the ramification index $e(L/K)$ is coprime to $p$, so $G_0(N/L)$
contains the unique Sylow $p$-subgroup $G_1(N/K)$ of $G_0(N/K)$, which implies 
$G_1(N/L) = G_1(N/K)$ and $G_1(N/F) = G_1(N/M)$. Hence, we have shown the part 
\[
\abs{G_1(N/F)} = \gcd(\abs{G_1(N/L)}, \abs{G_1(N/M)}).
\]
Since the quotient $G_0(N/K) / G_1(N/K)$ is a finite cyclic group, we have 
\[
\abs{\Phi(G_0(N/L)) \cap \Phi(G_0(N/M))} = \gcd(\abs{\Phi(G_0(N/L))}, \abs{\Phi(G_0(N/M))}).
\]
Hence, the remaining part 
\[
\abs{\Phi(G_0(N/F))} = \gcd(\abs{\Phi(G_0(N/L))}, \abs{\Phi(G_0(N/M))})
\]
is equivalent to the statement 
\[
\Phi(G_0(N/F)) = \Phi(G_0(N/L)) \cap \Phi(G_0(N/M)).
\]
Clearly, $\Phi(G_0(N/F))$ is a subgroup of both $\Phi(G_0(N/L))$ and $\Phi(G_0(N/M))$, 
so $\Phi(G_0(N/F))$ is contained in $\Phi(G_0(N/L)) \cap \Phi(G_0(N/M))$. Conversely,
any element in $\Phi(G_0(N/L)) \cap \Phi(G_0(N/M))$ can be written as $\Phi(\sigma) = \Phi(\tau)$
for some $\sigma \in G_0(N/L)$ and some $\tau \in G_0(N/M)$. Hence,
\[
\tau \in \sigma \cdot G_1(N/K) = \sigma \cdot G_1(N/L) \subseteq G_0(N/L)
\]
which implies 
\[
\tau \in G_0(N/L) \cap G_0(N/M) = G_0(N/F).
\]
Therefore, we get $\Phi(\tau) \in \Phi(G_0(N/F))$, which concludes the proof.
\end{proof}


\section{Consequences}

In this section, we give some consequences of our results. 

\begin{cor}\label{incext}
Suppose $F/K$ is a finite extension of fields and $\abs{\,\cdot\,}_K$ is an absolute value on $K$. Then there 
exists a canonical bijection between the set $\mathcal{V}$ of all absolute values on $F$ extending $\abs{\,\cdot\,}_K$ 
and the set of equivalence classes of all $K$-algebra embeddings $F \to \widehat{K}^a$ up to composition with 
$\Aut_{\widehat M}(\widehat{M}^a)$.
\end{cor}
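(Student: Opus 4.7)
The plan is to deduce this directly from Theorem~\ref{local} by specializing to $L = M = K$, with the given absolute value $\abs{\,\cdot\,}_K$ on both copies. Since $L/K$ is trivially finite and $F/M = F/K$ is finite by hypothesis, the hypotheses of Theorem~\ref{local} are satisfied, and the two absolute values $\abs{\,\cdot\,}_L, \abs{\,\cdot\,}_M$ in that theorem both coincide with $\abs{\,\cdot\,}_K$, so they extend a common absolute value on $K$ automatically.

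Next I would unpack the set $\mathcal{E}$ of Theorem~\ref{local} in this specialization. With $L = M = K$, the defining condition on a $K$-algebra embedding $\alpha\colon F \to \widehat{K}^a$ becomes the existence of a $\widehat{K}$-algebra embedding $\beta\colon \widehat{K} \to \widehat{K}^a$ with $\beta|_K = \alpha|_K$. Taking $\beta$ to be the canonical inclusion $\widehat{K}\hookrightarrow \widehat{K}^a$ always works, since both $\alpha|_K$ and $\beta|_K$ are then just the natural embedding $K\to \widehat{K}^a$. Hence $\mathcal{E}$ is the full set of $K$-algebra embeddings $F \to \widehat{K}^a$, and the equivalence relation in Theorem~\ref{local} becomes composition with $\Aut_{\widehat{K}}(\widehat{K}^a)$ (matching the statement of the corollary after correcting the evident typo $\widehat{M}\leftrightarrow \widehat{K}$).

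Applying Theorem~\ref{local} now yields the claimed canonical bijection between $\mathcal V$ and the set of equivalence classes of $K$-algebra embeddings $F \to \widehat{K}^a$ modulo $\Aut_{\widehat K}(\widehat{K}^a)$. There is essentially no obstacle here; the only step worth noting is the observation that the side condition defining $\mathcal{E}$ becomes vacuous when $L = K$, because the canonical inclusion $\widehat{K} \hookrightarrow \widehat{K}^a$ always supplies the required $\beta$.
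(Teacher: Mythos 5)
Your proposal matches the paper's proof exactly: specialize Theorem~\ref{local} to $L = M = K$ and observe that the side condition on $\mathcal{E}$ becomes vacuous (the canonical inclusion $\widehat{K}\hookrightarrow \widehat{K}^a$ always furnishes the required $\beta$), so $\mathcal{E}$ is simply the set of all $K$-algebra embeddings $F\to\widehat{K}^a$. You are also right that the corollary's statement should read $\Aut_{\widehat K}(\widehat{K}^a)$ in place of $\Aut_{\widehat M}(\widehat{M}^a)$.
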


\begin{proof}
It follows directly from Theorem~\ref{local} by taking $L = M = K$, since in this special case the set $\mathcal{E}$ 
defined in Theorem~\ref{local} is reduced simply to the set of all $K$-algebra embeddings $F \to \widehat{K}^a$.
\end{proof}

\begin{rmk}
Corollary~\ref{incext} is proved in \cite[Prop.~XII.3.1 and XII.3.2]{L}. In case $F/K$ is separable, Corollary~\ref{incext} is essentially 
\cite[Lemma~9.2.1]{C}, which directly implies \cite[Thm.~9.1.1]{C}; see also \cite[Thm.\ on p.~57]{CF}.
\end{rmk}

\begin{cor}\label{old-abh}
Let $F/K$ be a finite separable extension, and let $L$, $M$ be subextensions of $F$ over $K$ with $F = L.M$. 
Suppose $\abs{\,\cdot\,}_K$ is a complete discrete absolute value on $K$ such that $L/K$ is tamely ramified and 
$\mybar{K}$ is perfect. Then $e(F/K) = \lcm\bigl(e(L/K), e(M/K)\bigr)$.
\end{cor}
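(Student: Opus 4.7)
The plan is to deduce this from Proposition~\ref{Abhyankar} applied to a Galois closure. Let $N$ be the Galois closure of $F/K$ inside a separable closure of $K$; this exists and is finite over $K$ since $F/K$ is finite separable. By Proposition~\ref{cext}, the absolute value $\abs{\,\cdot\,}_K$ extends uniquely to a complete discrete absolute value $\abs{\,\cdot\,}_N$ on $N$, and hence uniquely to each of the intermediate fields $L$, $M$, $F$. I would first verify that the hypotheses of Proposition~\ref{Abhyankar} hold for this $N$: the extension $N/K$ is finite Galois; $F = L.M$ by hypothesis; $L/K$ is tamely ramified by hypothesis; and the residue field extension $\mybar{N}/\mybar{K}$ is separable because $\mybar{N}/\mybar{K}$ is algebraic (indeed of degree at most $[N:K]$ by Proposition~\ref{e&f}\eqref{1st}) over the perfect field $\mybar{K}$.

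Next I would apply Proposition~\ref{Abhyankar} to obtain
\[
e(N/F) = \gcd\bigl(e(N/L),\, e(N/M)\bigr).
\]
Now I would invoke the multiplicativity of the ramification index in towers: for any chain $K \le U \le V \le N$ of intermediate fields, the value groups satisfy $\abs{K^*}_N \le \abs{U^*}_N \le \abs{V^*}_N$, so $e(V/K) = e(V/U) \cdot e(U/K)$. Applied to the three towers $K \le F \le N$, $K \le L \le N$, $K \le M \le N$, this yields
\[
e(N/K) = e(N/F)\,e(F/K) = e(N/L)\,e(L/K) = e(N/M)\,e(M/K).
\]

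Finally, I would carry out the purely arithmetic manipulation: substituting $e(N/L) = e(N/K)/e(L/K)$ and $e(N/M) = e(N/K)/e(M/K)$ into the gcd formula and using the standard identity $\gcd(n/a, n/b) = n/\lcm(a,b)$ (valid when $a, b \mid n$) with $n = e(N/K)$ gives
\[
e(N/F) = \frac{e(N/K)}{\lcm\bigl(e(L/K),\, e(M/K)\bigr)}.
\]
Combined with $e(F/K) = e(N/K)/e(N/F)$, this yields the desired equality $e(F/K) = \lcm\bigl(e(L/K), e(M/K)\bigr)$.

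The only nontrivial step is the verification that Proposition~\ref{Abhyankar} applies — specifically, that $\mybar{N}/\mybar{K}$ is separable — and this is immediate from the perfectness hypothesis on $\mybar{K}$ combined with the finiteness of the residue extension. Everything else is multiplicativity of ramification indices and elementary arithmetic of gcd and lcm, so no genuine obstacle is anticipated.
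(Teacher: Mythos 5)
Your proof is correct and takes essentially the same route as the paper's: pass to the Galois closure $N$ of $F/K$, observe that perfectness of $\mybar{K}$ gives separability of $\mybar{N}/\mybar{K}$, apply Proposition~\ref{Abhyankar} to get $e(N/F)=\gcd(e(N/L),e(N/M))$, and then convert to the lcm statement via multiplicativity of $e$ in towers. You have merely spelled out the arithmetic equivalence that the paper states without detail.
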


\begin{proof}
Let $N$ be the Galois closure of $F/K$. Since the residue field $\mybar{K}$ is perfect, by Proposition~\ref{Abhyankar} 
we have $e(N/F) = \gcd\bigl(e(N/L), e(N/M)\bigr)$, which is equivalent to the desired conclusion $e(F/K) = \lcm\bigl(e(L/K), e(M/K)\bigr)$.
\end{proof}

\begin{rmk}
Both the proof of Proposition~\ref{Abhyankar} and the proof of \cite[Thm.~2.1]{CH} are 
similar to the proof of \cite[Thm.~3.9.1]{St}.
We emphasize that Abhyankar's lemma is a local property, so that our version in Proposition~\ref{Abhyankar} is not only more 
general and more concise than many versions, but also its statement seems closer to isolating the key underlying mechanism.
\end{rmk}

We conclude this paper with an application of our results to nonsingular projective curves 
over an algebraically closed field.

\begin{notation}\label{note}
Let $L/K$ and $M/K$ be finite extensions of function fields over an algebraically closed field $k$, where at 
least one of $L/K$ and $M/K$ is separable.  Let $S$ be a place of $K$ and let $P$ and $R$ be places of $L$ and $M$, respectively, which 
both lie over $S$. Then the tensor product $L \otimes_K M$ is a product of fields 
$F_1 \times F_2 \times \cdots \times F_n$, where each $F_i$ comes equipped with embeddings $L \to F_i$ and 
$M \to F_i$ which yield a commutative diagram with the given embeddings $K \to L$ and $K \to M$. So we may consider the set $\mathcal{P}_i$ of all places on each $F_i$ which lie over both $P$ and $R$.
\end{notation}

The 
following identity is a direct consequence of Theorem~\ref{global} and Proposition~\ref{e&f}.

\begin{cor}\label{sum-e}
In the setting of Notation~\ref{note}, we have
\[\sum_{i=1}^n \sum_{Q\in \mathcal{P}_i} e(Q/S) = e(P/S) \cdot e(R/S).\]
\end{cor}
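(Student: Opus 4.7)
The plan is to reduce the identity to the degree equation in part~\eqref{degree} of Theorem~\ref{global} by exploiting the fact that for places on function fields over an algebraically closed field $k$, every residue field coincides with $k$, so every residue degree is $1$. Without loss of generality I would assume $L/K$ is separable, so that Theorem~\ref{global} applies directly with the tensor-product decomposition $L\otimes_K M\cong F_1\times\cdots\times F_n$ used in Notation~\ref{note}. The absolute values on $L$, $M$, $F_i$ attached to the places $P$, $R$, $Q$ are discrete, and their completions $\widehat{L}$, $\widehat{M}$, $\widehat{F_{i,Q}}$ are the usual local fields at these places; by Proposition~\ref{e&f}\eqref{2nd} the ramification indices $e(P/S)$, $e(R/S)$, $e(Q/R)$, $e(Q/S)$ agree with those of the corresponding completed extensions.

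Next I would note that since $k$ is algebraically closed and each of $L$, $M$, $F_i$ is a function field over $k$ with residue field $k$ at every place, the residue degree $f$ is equal to $1$ at each relevant level. Consequently, Proposition~\ref{e&f}\eqref{1st} applied to the (complete, discrete) extensions $\widehat L/\widehat K$ and $\widehat{F_{i,Q}}/\widehat M$ gives
\[
[\widehat{L}:\widehat{K}]=e(P/S)\qquad\text{and}\qquad [\widehat{F_{i,Q}}:\widehat{M}]=e(Q/R).
\]

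Now I would combine these identifications with the multiplicativity of ramification indices, $e(Q/S)=e(Q/R)\cdot e(R/S)$, and the equality in Theorem~\ref{global}\eqref{degree}, to compute
\[
\sum_{i=1}^n\sum_{Q\in\mathcal{P}_i} e(Q/S)
=e(R/S)\sum_{i=1}^n\sum_{Q\in\mathcal{P}_i} e(Q/R)
=e(R/S)\sum_{i=1}^n\sum_{Q\in\mathcal{P}_i}[\widehat{F_{i,Q}}:\widehat{M}]
=e(R/S)\cdot[\widehat{L}:\widehat{K}]=e(R/S)\cdot e(P/S),
\]
which is the claimed formula. Here the set $\mathcal V_i$ in Theorem~\ref{global} is identified with $\mathcal{P}_i$ via the bijection between places and equivalence classes of discrete absolute values that are trivial on $k$.

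There is no serious obstacle in the argument; the main care point is the bookkeeping around completions, namely verifying that the absolute values $\abs{\,\cdot\,}_L$ and $\abs{\,\cdot\,}_M$ attached to $P$ and $R$ are compatible over $S$ (they are, by hypothesis), so that Theorem~\ref{global} is actually applicable, and then consistently using that all residue degrees are $1$ to convert local degrees into ramification indices. The slight technicality of handling the case where only $M/K$ (rather than $L/K$) is separable is resolved simply by swapping the roles of $L,P$ and $M,R$, which preserves the statement.
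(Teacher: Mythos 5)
Your proposal is correct and follows essentially the same route as the paper: apply Theorem~\ref{global}\eqref{degree}, convert local degrees to ramification indices via Proposition~\ref{e&f} using the fact that residue degrees are trivial over an algebraically closed constant field, and then multiply by $e(R/S)$ and use multiplicativity $e(Q/S)=e(Q/R)\cdot e(R/S)$. The extra care you take in noting the WLOG reduction to the case $L/K$ separable (so that Theorem~\ref{global} is applicable) is a small point the paper leaves implicit, but otherwise the arguments coincide.
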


\begin{proof}
By Theorem~\ref{global} we have the degree identity 
\[
\sum_{i=1}^n \sum_{Q\in \mathcal{P}_i} [\widehat{F_{i,Q}} : \widehat{M}] = [\widehat{L} : \widehat{K}],
\]
which by Proposition~\ref{e&f} implies the following identity
\[
\sum_{i=1}^n \sum_{Q\in \mathcal{P}_i} e(Q/R) \cdot f(Q/R) = e(R/S) \cdot f(R/S). 
\]
Since the constant field $k$ is algebraically closed, all values $f(Q/R)$ and $f(R/S)$ equal $1$. 
Thus, the identity becomes
\[
\sum_{i=1}^n \sum_{Q\in \mathcal{P}_i} e(Q/R) = e(P/S),
\] 
which implies the desired identity 
\[ 
\sum_{i=1}^n \sum_{Q\in \mathcal{P}_i} e(Q/S) = e(P/S) \cdot e(R/S)
\] 
upon multiplying both sides by $e(R/S)$.
\end{proof}

\begin{cor}\label{num-p}
If Notation~\ref{note} holds and
in addition both $L/K$ and $M/K$ are separable, and also at least one of $Q$ or $R$ is tamely ramified over $S$, 
then \[\Bigl\lvert\coprod_{i=1}^n \mathcal{P}_i\Bigr\rvert = \gcd\bigl(e(P/S), e(R/S)\bigr).\]
\end{cor}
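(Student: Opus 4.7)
The plan is to combine Corollary~\ref{sum-e} with Abhyankar's lemma (Corollary~\ref{old-abh}). By Corollary~\ref{sum-e},
\[
\sum_{i=1}^n \sum_{Q\in \mathcal{P}_i} e(Q/S) = e(P/S)\cdot e(R/S),
\]
so it suffices to show that $e(Q/S) = \lcm(e(P/S), e(R/S))$ for every $Q$ in $\coprod_i \mathcal{P}_i$. The desired count then follows from the identity $\lcm(a,b)\cdot\gcd(a,b) = ab$.

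To prove this local identity, fix $Q \in \mathcal{P}_i$ and write $\widehat{K}$, $\widehat{L}$, $\widehat{M}$, $\widehat{F_{i,Q}}$ for the completions at $S$, $P$, $R$, $Q$ respectively. The first step is to identify $\widehat{F_{i,Q}}$ with the compositum $\widehat{L}\cdot\widehat{M}$ inside $\widehat{F_{i,Q}}$. Corollary~\ref{gen} gives $\widehat{L} = L\cdot\widehat{K}$, $\widehat{M} = M\cdot\widehat{K}$, and $\widehat{F_{i,Q}} = F_i\cdot\widehat{M}$. Since $F_i$ is a quotient of $L\otimes_K M$, it contains and is generated by the images of $L$ and $M$, so
\[
\widehat{F_{i,Q}} = L\cdot M\cdot\widehat{M} = L\cdot\widehat{M} = \widehat{L}\cdot\widehat{M}.
\]

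Next I apply Corollary~\ref{old-abh} to $\widehat{F_{i,Q}}/\widehat{K}$ with intermediate fields $\widehat{L}$ and $\widehat{M}$. This extension is finite and separable, because $F_i/K$ is (as $L/K$ and $M/K$ are both separable, $L\otimes_K M$ is étale over $K$, and $F_i$ is one of its factor fields). The residue field of $\widehat{K}$ is $k$, which is algebraically closed and therefore perfect. By Proposition~\ref{e&f}\eqref{2nd}, the hypothesis that (without loss of generality) $P/S$ is tamely ramified transfers to tameness of $\widehat{L}/\widehat{K}$. Hence Corollary~\ref{old-abh} yields
\[
e(Q/S) = e(\widehat{F_{i,Q}}/\widehat{K}) = \lcm\bigl(e(\widehat{L}/\widehat{K}),\, e(\widehat{M}/\widehat{K})\bigr) = \lcm(e(P/S), e(R/S)),
\]
invoking Proposition~\ref{e&f}\eqref{2nd} once more in the final equality. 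The only step requiring care is the compositum identification in step one, but this is a direct consequence of Corollary~\ref{gen}; everything else reduces to a substitution into Corollary~\ref{sum-e} together with the universal identity $\lcm\cdot\gcd = \text{product}$.
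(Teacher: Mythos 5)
Your proof is correct and follows the same strategy as the paper: apply Corollary~\ref{sum-e} to get the sum of local ramification indices, show each $e(Q/S)$ equals $\lcm(e(P/S),e(R/S))$ via Corollary~\ref{old-abh}, and divide using $\lcm\cdot\gcd=\text{product}$. The one place you are more careful than the paper is in explicitly passing to completions (identifying $\widehat{F_{i,Q}}=\widehat{L}\cdot\widehat{M}$ via Corollary~\ref{gen} and transferring ramification indices via Proposition~\ref{e&f}\eqref{2nd}) before invoking Corollary~\ref{old-abh}, whose hypotheses require a complete absolute value; the paper's proof elides this step and applies the corollary directly to $F_i/K$, so your added detail is a genuine improvement in rigor rather than a different route.
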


\begin{proof}
Each extension $F_i/K$ is finite separable, $L$ and $M$ are subextensions of $F_i$ over $K$ with $F_i = L.M$. 
By Corollary~\ref{old-abh} each place $Q$ in any $\mathcal{P}_i$ satisfies $e(Q/S)=\lcm\bigl(e(P/S),e(R/S)\bigr)$. Therefore, the identity $\abs{\coprod_{i=1}^n \mathcal{P}_i} = \gcd\bigl(e(P/S), e(R/S)\bigr)$ follows 
from Corollary~\ref{sum-e}.
\end{proof}

\begin{rmk}
In case both $Q$ and $R$ are tamely ramified over $S$, Corollary~\ref{num-p} becomes a corrected 
version of \cite[Lemma~7.1]{DF} which is another version of Abhyankar's lemma. 
\end{rmk}



\end{document}